\setlist[1]{itemsep=5pt}
\newcommand{\comment}[1]{}
      \def\@setcopyright{}
      \def\serieslogo@{}
\newcommand{\Complex}{\mathbb C}
\newcommand{\C}{\mathbb C}
\newcommand{\Real}{\mathbb R}
\newcommand{\ddbar}{\overline\partial}
\newcommand{\ol}{\overline}
\newcommand{\Td}{\widetilde}
\newcommand{\norm}[1]{\left\Vert#1\right\Vert}
\newcommand{\abs}[1]{\left\vert#1\right\vert}
\newcommand{\set}[1]{\left\{#1\right\}}
\newcommand{\To}{\rightarrow}
\newtheorem{theorem}{Theorem}[section]
\newtheorem{lemma}[theorem]{Lemma}
\newtheorem{corollary}[theorem]{Corollary}
\newtheorem{proposition}[theorem]{Proposition}
\theoremstyle{definition}
\newtheorem{definition}[theorem]{Definition}
\newtheorem{example}[theorem]{Example}
\newtheorem{remark}[theorem]{Remark}
\numberwithin{equation}{section}
\begin{document}
\title[]{On the stability of equivariant embedding of compact CR manifolds with circle action}
\author[]{Chin-Yu Hsiao}
\address{Institute of Mathematics, Academia Sinica and National Center for Theoretical Sciences, Astronomy-Mathematics Building, No. 1, Sec. 4, Roosevelt Road, Taipei 10617, Taiwan}
\thanks{Chin-Yu Hsiao was partially supported by Taiwan Ministry of Science of Technology project
104-2628-M-001-003-MY2, the Golden-Jade fellowship of Kenda Foundation and Academia Sinica Career Development Award.}
\email{chsiao@math.sinica.edu.tw or chinyu.hsiao@gmail.com}
\author[]{Xiaoshan Li}
\address{School of Mathematics
and Statistics, Wuhan University, Wuhan 430072, Hubei, China}
\thanks{Xiaoshan Li was supported by  National Natural Science Foundation of China (Grant No. 11501422)}
\email{xiaoshanli@whu.edu.cn}

\author[George Marinescu]{George Marinescu}
\address{Universit{\"a}t zu K{\"o}ln,  Mathematisches Institut,
    Weyertal 86-90,   50931 K{\"o}ln, Germany\\
    \& Institute of Mathematics `Simion Stoilow', Romanian Academy,
Bucharest, Romania}
\thanks{}
\email{gmarines@math.uni-koeln.de}

\begin{abstract}
We prove the stability of the equivariant embedding of
compact strictly pseudoconvex CR
manifolds with transversal CR circle action under circle invariant
deformations of the CR structures.
\end{abstract}
\maketitle
\tableofcontents
\section{Introduction and statement of the main results}\label{Introduction}
Let $X$ be be a compact
strictly pseudoconvex CR manifold.
The question of whether or not $X$ admits a CR embedding into
a complex Euclidean space has attracted a lot attention.
This amounts to showing that the manifold has a sufficiently
rich collection of global CR functions.
It was shown by Boutet de Monvel \cite{BdM1:74b}
that the answer is affirmative if the dimension  of $X$ is at least five.
The obstructions to constructing global CR functions lie in
the first Kohn-Rossi cohomology group $H^1_b(X)$, which is
finite dimensional if $\dim X\geq 5$. An essential ingredient in
the embedding theorem \cite{BdM1:74b} is the Hodge theory for this group,
that will play an important role in the present paper, too.

In contrast, if $X$ has dimension three,
$X$ may not be even locally embeddable, see \cite{JT82,Ku82,Ni74}.
Furthermore, there are examples \cite{AS70,Bur:79,G94,Ro65} which show that even
when the CR structure on $X$ is locally embeddable (for example, when it is real analytic),
it can happen that the global CR functions on $X$ fail to separate points of $X$.
It was shown in \cite{BE90} that, in a rather precise sense,
``generic'' perturbations of the standard structure on the three
sphere are nonembeddable.

On the other hand, if a compact three dimensional strictly pseudoconvex CR manifold admits a
transversal CR $S^1$-action, it was shown by Lempert \cite{Le92},
Epstein \cite{Ep92} and recently in \cite{HM14,HHL15} by using the Szeg\H{o} kernel,
that such CR manifolds can always be CR embedded into a complex Euclidean space.

In recent years, much progress has been made in understanding the
embedding question from a deformational point of view,
that is, for CR structures which lie in a small neighborhood
of a fixed embedded structure,
see e.\,g.\ \cite{BlDu91,BE90,Ep92,EH00,HLY06,C15,Le92,Le94,W04}.

There are several distinct notions of stability:

(1) A CR-structure $(X,J)$ is said to be stable provided that the
entire algebra of CR functions deforms continuously under any sufficiently
small embeddable deformation $J'$.

(2) A CR-structure $(X,J)$ is said to be stable for a class of
embeddable deformations $\mathcal{F}$ provided that the entire algebra of CR-functions
deforms continuously under any sufficiently
small deformation $J'\in\mathcal{F}$.

(3) An embedding $F:(X,J)\to\C^N$ is stable for a class $\mathcal{F}$
of embeddable deformations, provided that for each $J'\in\mathcal{F}$
sufficiently close to $J$, there is an embedding
$F':(X,J')\to\C^N$, so that $F'$ is a small perturbation of $F$.

Notion (1) of course implies that, for any given embedding $F:(X,J)\to\C^N$,
there is a nearby embedding $F':(X,J')\to\C^N$, provided that
$(X,J')$ is embeddable and $J'$ is sufficiently close to $J$.
We say that two tensors are close if they are close in the
$C^\infty$ topology on the appropriate space.
For the round 3-sphere the first and second notions, while not explicitly stated,
already appear in Burns and Epstein \cite{BE90}, where it is demonstrated that the entire algebra
of CR functions is stable for the class of ``positive'' deformations, with no requirement
of $S^1$-invariance.
This work was extended by Epstein to positive deformations of circle bundles in \cite{Ep92}.
Lempert \cite{Le92} showed that all small embeddable deformations of the round sphere are,
in fact, positive. In a later paper \cite{Le94} he went on to show that all small embeddable deformations of
CR-structures on the boundaries of strictly pseudoconvex domains in $\C^2$ are stable
in the strongest sense, (1), above.

In the present paper we will only use the notion (3) of stability.
When $X$ is strictly pseudoconvex, of dimension at least five,
Tanaka \cite{T75} proved the stabilty in the sense (3),
provided the dimension of the Kohn-Rossi cohomology $H^{1}_{b}(X)$
is independent of CR structure.
Huang, Luk and Yau \cite{HLY06} studied the stability of embeddings for a
family of strongly pseudoconvex CR manifolds depending in a CR way on
the parameters. The CR dependence on the parameters
is crucial for the study of deformations of complex structures of
isolated singularities. For this topic we refer the readers to
\cite{BJ97, M99, H08, HLY06} and the references therein.

On the other hand, in the case of three dimensional strictly pseudoconvex
CR manifolds, Catlin and Lempert \cite{CL92}
showed that unstable CR embeddings exist.
The CR manifolds
with unstable embeddings arise as unit circle bundles in Hermitian
line bundles over projective manifolds. The instability of CR embeddings is
a consequence of the instability of very ampleness of line bundles.

As mentioned above, the stability of CR embeddings is closely related to the stability of the first
Kohn-Rossi cohomology (see \cite{T75, HLY06}).
Recently, it was shown in \cite{HHL15} that a compact strictly
pseudoconvex CR manifold with a locally free transversal CR $S^1$-action
can be CR embedded into some complex Euclidean space by CR functions lying
in the Fourier components with large positive frequency of the space of CR functions.
Since Fourier components with large frequency of the first Kohn-Rossi
cohomology vanish uniformly under $S^1$-invariant deformations of the
CR structure (see Theorem~\ref{main-160302}), we can expect in analogy
to \cite{T75,HLY06} that the CR
embedding established in \cite{HHL15} should be stable under the $S^1$-invariant deformations.
We will prove this using an additional argument, the stability
of the Szeg\H{o} projector. Similar arguments can  be found in a series of papers by
Epstein \cite{E06, E07, E07a} on relative index,
where the Szeg\H{o} projector also plays a central role.

Let us now formulate our main results. We refer to Section \ref{s-16030203} for some standard
notations and terminology used here.
Let $(X,HX,J)$ be a compact CR manifold
of dimension $2n-1$, $n\geqslant2$, endowed with a locally free $S^1$-action
$S^1\times X\to X$, $(e^{i\theta},x)\mapsto e^{i\theta}  x$ and we let $T$
be the infinitesimal generator of the $S^1$-action. We assume that this $S^1$-action is transversal CR, that is, $T$ preserves the CR structure $T^{1,0}X$,
and $T$ and $T^{1,0}X\oplus\overline{T^{1,0}X}$ generate
the complex tangent bundle to $X$. Let $\overline\partial_b$ be the tangential
Cauchy-Riemann operator on $X$. We denote by
${\rm Ker}(\overline\partial_b)=\{u\in C^\infty(X): \overline\partial_b u=0\}$
the space of smooth CR functions.
For any $m\in\mathbb Z$, we define the $m$-th Fourier component of CR functions
$H^0_{b, m}(X)=\{u\in{\rm Ker}(\overline\partial_b): Tu=imu\}.$
It was shown in \cite{HHL15} that $X$ can be CR embedded into complex Euclidean space by
CR functions
which lie in the Fourier components of CR functions with large positive frequency $m$.
Precisely, for every $m\in\mathbb N$,
there exist integers $\{m_j\}_{j=1}^N$ with $m_j\geq m$, $1\leq j\leq N$, and
CR functions $\{f_j\}_{j=1}^N$ with $f_j\in H^0_{b, m_j}(X)$
such the (equivariant) CR map from $X$ to $\mathbb C^N$
\begin{equation}
\Phi: X\rightarrow \mathbb C^N, ~ x\mapsto (f_1(x), \ldots, f_N(x)),
\end{equation}
is an embedding.
Our goal is to show that such an embedding
is stable under  $S^1$-invariant deformations of the CR structure
(cf.\ Definition~\ref{g-160301b}).
Our main result is the following.
\begin{theorem}\label{mtheorem-160323}
Let $(X, HX, J)$ be a compact connected strictly pseudoconvex CR
manifold with a  locally free transversal CR $S^1$-action.
Let $\{J_t\}_{t\in(-\delta_0, \delta_0)}$
be any $S^1$-invariant deformation of $J$.
Then there is a positive integer $m_0$ such that every CR embedding
$\Phi=(\Phi_1, \ldots, \Phi_N): (X,HX,J)\rightarrow\mathbb C^N$ with
$\Phi_j\in H^0_{b, m_j}(X), m_j>m_0, j=1,\ldots,N$,
is stable with respect to the deformation
$\{J_t\}_{t\in(-\delta_0,\delta_0)}$, that is,
for $|t|$ small enough there exists
a $S^1$-equivariant CR embedding $f_t$ of the structure $J_t$ such that
$f_t$ converges to $f$ as $t\to0$ in the $C^m$ topology for any non-negative $m\in\mathbb Z$.
\end{theorem}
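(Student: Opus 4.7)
The plan is to transport each weight CR function $\Phi_j$ of the original structure $J$ to a weight CR function of the deformed structure $J_t$ by applying the Szeg\H{o} projector of $J_t$ onto the $m_j$-th Fourier component, and then to argue that the resulting tuple $f_t$ is still an embedding via the openness of embeddings in the $C^\infty$ topology. The $S^1$-equivariance will come for free, since each component will remain a weight vector of the same weight $m_j$.

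To set things up, observe that because $\{J_t\}_{t\in(-\delta_0,\delta_0)}$ is $S^1$-invariant, the infinitesimal generator $T$ stays transversal and CR for every $J_t$. Hence the Fourier decomposition $L^2(X)=\bigoplus_{m\in\mathbb Z}L^2_m(X)$ induced by the $S^1$-action is independent of $t$ and is preserved by each $\overline\partial_{b,t}$. For each integer $m$ and each $t$ one defines
$H^0_{b,m}(X,J_t)=\{u\in L^2_m(X):\overline\partial_{b,t}u=0\}$,
together with the orthogonal projection $S_{m,t}$ onto $H^0_{b,m}(X,J_t)$, which is the $m$-th Fourier component of the Szeg\H{o} projector of $(X,J_t)$.

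The heart of the argument is the stability of $\{S_{m,t}\}$ for large $m$. By Theorem~\ref{main-160302}, there exist $m_0\in\mathbb N$ and $\delta\in(0,\delta_0)$ such that the $m$-th Fourier component of $H^1_b(X,J_t)$ vanishes for every $m\ge m_0$ and $|t|<\delta$, uniformly in $t$. Combining this uniform vanishing with uniform subelliptic estimates for the family of Kohn Laplacians $\Box_{b,t}$ restricted to the fixed space $L^2_m(X)$ yields, for each $m\ge m_0$, a family of partial inverses $N_{m,t}$ satisfying $\Box_{b,t}N_{m,t}=N_{m,t}\Box_{b,t}=I-S_{m,t}$ on $L^2_m(X)$, with bounds uniform in $t$. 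Since $t\mapsto\overline\partial_{b,t}$ is smooth, this translates into smooth dependence of $S_{m,t}$ on $t$ in any Sobolev norm on $L^2_m(X)$, and by Kohn-type elliptic regularity in the $C^\infty$ topology on smooth functions of fixed weight.

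With the stability of the Szeg\H{o} projector in hand, the proof concludes quickly. Given the embedding $\Phi=(\Phi_1,\ldots,\Phi_N)$ with $\Phi_j\in H^0_{b,m_j}(X)$ and $m_j\ge m_0$, set $\Phi_{j,t}:=S_{m_j,t}\Phi_j$ and define $f_t:=(\Phi_{1,t},\ldots,\Phi_{N,t}):X\to\mathbb C^N$ for $|t|<\delta$. By construction $\Phi_{j,t}\in H^0_{b,m_j}(X,J_t)$, so $T\Phi_{j,t}=im_j\Phi_{j,t}$ and $f_t$ is $S^1$-equivariant (with the same weights $m_j$). Continuity of $S_{m_j,t}$ in $t$ gives $\Phi_{j,t}\to\Phi_j$ in $C^\infty(X)$ as $t\to 0$, so that $f_t\to \Phi$ in $C^\infty(X,\mathbb C^N)$. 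Since embeddings form an open subset of $C^\infty(X,\mathbb C^N)$ in the Whitney topology, $f_t$ is a CR embedding of $(X,J_t)$ for all sufficiently small $t$, and this is the desired stable equivariant embedding.

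The main obstacle is the second step: quantifying the stability of $S_{m,t}$. The required ingredients are uniform subelliptic (Kohn-type) estimates for $\Box_{b,t}$ restricted to each Fourier component $L^2_m(X)$ together with the uniform spectral gap that follows from Theorem~\ref{main-160302}. This is exactly the place where the analogy with Epstein's use of the stability of the Szeg\H{o} projector in his relative index program, alluded to in the introduction, will be exploited.
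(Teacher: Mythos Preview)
Your proposal is correct and follows essentially the same route as the paper: define $f_t$ by applying the deformed Szeg\H{o} projectors $S_{m_j,t}$ to the components $\Phi_j$, prove $S_{m_j,t}\to S_{m_j}$ in all $C^k$ norms via uniform G{\aa}rding/Kohn estimates plus the uniform spectral gap from Theorem~\ref{main-160302} (this is exactly the content of the paper's Section~\ref{stability-160323}), and then conclude that $f_t$ is an embedding for small $|t|$. The only cosmetic difference is that where you invoke the openness of embeddings in $C^\infty(X,\mathbb C^N)$, the paper writes out the standard immersion-plus-injectivity argument by hand (Jacobian rank for immersivity, and a contradiction argument using a local bi-Lipschitz bound for injectivity).
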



This paper is organized as follows. In Section \ref{prem160323},
we set up notation and terminology.  Section \ref{section-160322}
is devoted to study the $S^1$-invariant deformation of CR structure.
Furthermore, will prove the simultaneous vanishing theorem of Fourier
component of Kohn-Rossi cohomology. In Section \ref{stability-160323},
we will be concerned with the stability of the Szeg\H{o}
kernel of Fourier components of Kohn-Rossi cohomology.
Using the stability of Szeg\H{o} kernel,
we will prove Theorem \ref{mtheorem-160323} in Section \ref{s_stab}.

\section{Preliminaries}\label{prem160323}

\subsection{Set up and terminology}\label{s-16030203}
Let $(X, T^{1,0}X)$ be a compact CR manifold of dimension $2n-1, n\geq 2$,
where $T^{1,0}X$ is a CR structure of $X$,
that is, $T^{1,0}X$ is a subbundle of the complexified tangent
bundle $\mathbb{C}TX$ of rank $n-1$ satisfying
$T^{1,0}X\cap T^{0,1}X=\{0\},$ where $T^{0,1}X=\overline{T^{1,0}X}$
and $[\mathcal V,\mathcal V]\subset\mathcal V,$ where $\mathcal V=C^\infty(X, T^{1,0}X).$
There is a unique subbundle $HX$ of $TX$ such that
$\mathbb CHX=T^{1, 0}X\bigoplus T^{0, 1}X$, i.e.,
$HX$ is the real part of $T^{1, 0}X\bigoplus T^{0, 1}X.$
Furthermore, there exists a homomorphism $J: HX\rightarrow HX$ such that $J^2=-{\rm id}$,
where $\rm id$ denotes the identity ${\rm id}: \mathbb CHX\rightarrow\mathbb CHX$.
By complex linear extension of $J$ to $\mathbb CTX$, the $i$-eigenspace of $J$
is given by $T^{1, 0}X=\{V\in\mathbb CHX: JV=iV\}$. We shall also write $(X, HX, J)$
to denote a compact CR manifold.
Let $E$ be a smooth vector bundle over $X$.
We use $\Gamma(E)$ to denote the space of $C^\infty$-smooth sections of $E$ on $X$.

Let $(X, HX, J)$ be a compact CR manifold. Let $\Omega\subset\Real$ be an open neighborhood of $0$.
We say that $\set{J_t}_{t\in\Omega}$ is a deformation of $J$ if

(I) For each $t\in\Omega$, there is an endomorphism
$J_t: HX\rightarrow HX$ with $J_t^2=-{\rm id}$ and the $i$ eigenspace
$T_t^{1, 0}X=\{U\in \mathbb CHX: J_t U=iU\}$  is a CR structure on $X$.

(II) $J_0=J$.

(III) $J_t$  depends smoothly
on $t$, that is, for every $U\in HX$ and $V^*\in T^*X$ we have
$\langle\,J_tU\,,\,V^*\,\rangle\in C^\infty(\Omega)$.

From now on, we assume that $(X, HX,J)$ admits a $S^1$-action:
$S^1\times X\rightarrow X, (e^{i\theta}, x)\mapsto e^{i\theta}\circ x$.
Here, we use $e^{i\theta}$ to denote the $S^1$-action.
Let $T\in C^\infty(X, TX)$ denote the global real vector field induced by the $S^1$- action given as follows
\begin{equation}\label{e-160301}
(Tu)(x)=\frac{\partial}{\partial\theta}\left(u(e^{i\theta}\circ x)\right)\Big|_{\theta=0}\,, \:\:u\in C^\infty(X).
\end{equation}
We say that the $S^1$-action $e^{i\theta} (0\leq\theta<2\pi$) is CR if
\begin{equation}\label{b-160228}
[T, \Gamma(T^{1, 0}X)]\subset \Gamma(T^{1, 0}X),
\end{equation}
where $[\cdot\,,\cdot]$ denotes the Lie bracket between the smooth vector fields on $X$.
Furthermore, we say that the $S^1$- action is transversal if for each $x\in X$,
\begin{equation}
\mathbb CT_xX=\mathbb CT(x)\oplus T_x^{1,0}(X)\oplus T_x^{0,1}X.
\end{equation}
From now on, we assume that the $S^1$-action on $(X, HX,J)$ is transversal and CR.
Let $\set{J_t}_{t\in\Omega}$ be  a deformation of $J$, where $\Omega\subset\Real$
is an open neighborhood $0\in\Omega$.
As before, put $T_t^{1, 0}X=\{U\in \mathbb CHX: J_t U=iU\}$. We need
\begin{definition}\label{g-160301b}
With the notations above, we say that $\set{J_t}_{t\in\Omega}$ are $S^1$-invariant deformations of $J$ if $[T, \Gamma(T_t^{1, 0}X)]\subset\Gamma(T_t^{1, 0}X)$ for every $t\in\Omega$.
\end{definition}
\begin{definition}\label{d-gue160409}
Let $f:(X,HX,J)\To\mathbb C^k$ be a CR embedding and let
$\set{J_t}_{t\in\Omega}$ be $S^1$-invariant deformations of $J$,
where $\Omega\subset\Real$ is an open neighborhood of $0$.
We say that $f$ is stable with respect to $\set{J_t}_{t\in\Omega}$
if there is a $\delta>0$ with
$[-\delta,\delta]\subset\Omega$ such that for every $t\in (-\delta,\delta)$,
we can find a CR embedding $f_t: (X,HX,J_t)\To\mathbb C^k$ and
$\lim\limits_{t\To0}\norm{f_t-f}_{C^m(X, \mathbb C^k)}=0$,
for every $m\in\mathbb N_0:=\mathbb N\cup\set{0}$.
\end{definition}
\begin{lemma}\label{l-160302}
With the notations used above,  we have $L_T J=0$ on $HX$, where $L_T$
denotes the Lie derivative along the direction $T$.
\end{lemma}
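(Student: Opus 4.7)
The plan is to compute the Lie derivative $L_TJ$ directly by using the splitting $\mathbb C HX = T^{1,0}X \oplus T^{0,1}X$ into the $\pm i$-eigenspaces of $J$, and to exploit the CR condition \eqref{b-160228} on the $S^1$-action.

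First I would observe that $L_T$ preserves sections of $HX$, so that $(L_TJ)(V) := L_T(JV) - J(L_TV) = [T,JV] - J[T,V]$ makes sense as a section of $HX$ for every $V \in \Gamma(HX)$. Indeed, by hypothesis we have $[T,\Gamma(T^{1,0}X)] \subset \Gamma(T^{1,0}X)$; since $T$ is a real vector field, taking complex conjugates gives $[T,\Gamma(T^{0,1}X)] \subset \Gamma(T^{0,1}X)$, and therefore $[T,\Gamma(\mathbb C HX)] \subset \Gamma(\mathbb C HX)$. Restricting to real sections yields $[T,\Gamma(HX)] \subset \Gamma(HX)$.

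Next I would verify the identity $[T,JU] = J[T,U]$ for every $U \in \Gamma(T^{1,0}X)$. Since $JU = iU$, the left-hand side equals $i[T,U]$. By the CR condition, $[T,U]$ again lies in $\Gamma(T^{1,0}X)$, so $J[T,U] = i[T,U]$ as well. Conjugating, the same identity holds for $U \in \Gamma(T^{0,1}X)$ (where now $JU = -iU$).

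Finally, any $V \in \Gamma(HX)$ can be written as $V = U + \overline{U}$ with $U \in \Gamma(T^{1,0}X)$ (complex-linearly extending the decomposition). Applying the previous step to both summands and using linearity of $[T,\cdot]$ and $J$, I obtain $[T,JV] = J[T,V]$, i.e.\ $(L_TJ)V = 0$. Since $V$ was arbitrary, $L_TJ = 0$ on $HX$.

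There is essentially no obstacle here: the proof is a two-line verification once the eigenspace decomposition of $J$ and the defining property $[T,\Gamma(T^{1,0}X)] \subset \Gamma(T^{1,0}X)$ of a transversal CR action are in place. The only subtlety worth stating explicitly is that $L_T$ genuinely preserves $HX$ (not merely $\mathbb C HX$), which is why the conjugation step using that $T$ is a real vector field matters.
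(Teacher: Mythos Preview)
Your proof is correct and follows essentially the same route as the paper: both compute $(L_TJ)U = L_T(JU) - J(L_TU)$ on $U\in\Gamma(T^{1,0}X)$ using $JU=iU$ together with the CR condition $L_TU\in\Gamma(T^{1,0}X)$, then pass to $T^{0,1}X$ by conjugation and conclude on $HX$ via the real splitting. Your explicit remark that $L_T$ preserves $\Gamma(HX)$ (so the computation is well posed) is a small clarification the paper leaves implicit, but otherwise the arguments coincide.
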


\begin{proof}
For any $U\in \Gamma(T^{1, 0}X)$, $L_T J(U)=L_T(JU)-JL_TU=
\sqrt{-1}L_TU-\sqrt{-1}L_TU=0.$
Here, we have used the fact that the $S^1$-action is CR,
that is, $L_TU\in\Gamma(T^{1, 0}X)$ for any $U\in\Gamma(T^{1, 0}X).$
For any $V\in\Gamma(T^{0, 1}X)$, we have $L_TJ(V)=\overline{L_TJ(\overline V)}=0.$
Since $HX$ is the real part of $T^{1, 0}X\bigoplus T^{0, 1}X$, the Lemma follows.
\end{proof}

Since $[\Gamma(T^{1, 0}X), \Gamma(T^{1, 0}X)]\subset \Gamma(T^{1, 0}X)$,
we have $[JU, JV]-[U, V]\in C^{\infty}(X, HX)$ for all $U, V\in C^{\infty}(X, HX)$.
Let $\omega_0$ be the global real $1$-form dual to $T$, that is,
\begin{equation}\label{f-160301}
\langle\omega_0, T\rangle=1, \langle \omega_0, HX\rangle=0.
\end{equation}
Then for each $x\in X$, we define a quadratic form on $HX$ by
\begin{equation}\label{e-160404}
\mathcal L_x(U, V)=-d\omega_0(JU, V), \forall~ U, V\in H_x X.
\end{equation}
The quadratic form is called the Levi form at $x$. We extend $\mathcal L$ to
$\mathbb CHX$ by complex linear extension. Then for $U, V\in T_x^{1, 0}X$,
\begin{equation}
\mathcal L_x(U, \overline V)=-d\omega_0(JU, \overline V)=-id\omega_0(U, \overline V).
\end{equation}
\begin{definition}
We say $T^{1, 0}X$ is a strictly pseudoconvex structure and $X$
is a strictly pseudoconvex CR manifold if the Levi form $\mathcal L_x$
is a positive definite quadratic form on $H_x X$ for each $x\in X$.
\end{definition}

In the following, we always assume that  $X$ is a compact connected strictly
pseudoconvex CR manifold with a transversal CR $S^1$-action.
It should be noted that a strictly pseudoconvex CR manifold is
always a contact manifold. From (\ref{f-160301}), we see that
$\omega_0$ is a contact form, $HX$ is the contact plane and $T$
is the Reeb vector field. Using (\ref{e-160404}) we may extend the
Levi form to a Riemannian metric $g$ on $TX$, which will play a crucial role in the sequel.

\begin{definition}
Let $X$ be a compact strictly pseudoconvex CR manifold with a
transversal CR $S^1$-action. Let $g$ be the Riemannian metric given by
\begin{equation}\label{a-160227}
g(U, V)=\mathcal L_x(U, V),\quad g(U, T)=g(T, U)=0, \quad g(T, T)=1,
\end{equation}
for any $U, V\in H_xX, x\in X$. This is called the Webster metric on $X$.
\end{definition}

The volume form associated with the Webster metric is denoted by $dv_X$ and by direct calculation
\begin{equation}\label{volume}
dv_X=\omega_0\wedge\frac{(d\omega_0)^{n-1}}{(n-1)!}.
\end{equation}
The volume form $dv_X$ associated with the Webster metric depends only on the contact form $\omega_0$.


For $U, V\in T^{1, 0}_xX$, we can check that
$\mathcal L_x(U, V)=\langle\,\omega_0(x),[J\mathscr U,\mathscr V](x)\,\rangle=0$, where $\mathscr U, \mathscr V\in\Gamma(T^{1,0}X)$ with $\mathscr U(x)=U$, $\mathscr V(x)=V$.
Thus, $\mathcal L_x(U, \overline V)=-i d\omega_0(U, \overline V)$ is a positive definite Hermitian quadratic form on $T^{1, 0}X$. We extend the Webster metric $g$ to $\mathbb CTX$ by complex linear extension. The Webster metric $g$ on $X$ induces a Hermitian metric $\langle\,\cdot\,|\,\cdot\,\rangle_g$ on $\Complex TX$:
\begin{equation}\label{e-gue160410}
\langle\,U\,|\,V\,\rangle_g:=g(U,\ol V),\ \ U, V\in\Complex TX.
\end{equation}
It is easy to check that the Webster metric is $J$-invariant on $HX$,
so we have the pointwise orthogonal  decomposition
\begin{equation}\label{e160407}
\mathbb CT_xX=\mathbb CT(x)\oplus T_x^{1,0}(X)\oplus T_x^{0,1}X.
\end{equation}
We call $\langle\,\cdot\,|\,\cdot\,\rangle_g$ the Webster Hermitian metric.

Denote by $T^{\ast 1,0}X$ and $T^{\ast0,1}X$ the dual bundles of
$T^{1,0}X$ and $T^{0,1}X$, respectively. Define the vector bundle of $(0,q)$-forms by
$\Lambda^qT^{\ast0,1}X$. Let $D\subset X$ be an open subset. Then $\Omega^{0,q}(D)$
denotes the space of smooth sections of $\Lambda^qT^{\ast0,1}X$ over $D$.

Fix $\theta_0\in [0, 2\pi)$. Let
$$d e^{i\theta_0}: \mathbb CT_x X\rightarrow \mathbb CT_{e^{i\theta_0}\circ x}X$$
denote the differential map of $e^{i\theta_0}: X\rightarrow X$.
By the property of transversal CR $S^1$-action, we can check that
\begin{equation}\label{a}
\begin{split}
de^{i\theta_0}:T_x^{1,0}X\rightarrow T^{1,0}_{e^{i\theta_0}\circ x}X,\\
de^{i\theta_0}:T_x^{0,1}X\rightarrow T^{0,1}_{e^{i\theta_0}\circ x}X,\\
de^{i\theta_0}(T(x))=T(e^{i\theta_0}\circ x).
\end{split}
\end{equation}
Let $(e^{i\theta_0})^\ast: \Lambda^q(\mathbb CT^\ast X)\rightarrow
\Lambda^q(\mathbb CT^\ast X)$ be the pull back of $e^{i\theta_0}, q=0,1,\ldots, n-1$.
From \eqref{a}, we can check that for every $q=0, 1,\ldots, n-1$
\begin{equation}\label{j1}
(e^{i\theta_0})^\ast: \Lambda^qT^{\ast0,1}_{e^{i\theta_0}\circ x}X\rightarrow \Lambda^qT_x^{\ast0,1}X.
\end{equation}

For $u\in\Omega^{0,q}(X)$ we define $Tu$ as follows:
\begin{equation}\label{b}
(Tu)(X_1,\ldots, X_q):=\frac{\partial}{\partial\theta}
\left((e^{i\theta})^\ast u(X_1,\ldots, X_q)\right)\Big|_{\theta=0}\,,
\quad X_1,\ldots, X_q\in T_x^{1,0}X.
\end{equation}
From (\ref{j1}) and (\ref{b}), we have  $Tu\in\Omega^{0, q}(X)$ for all
$u\in\Omega^{0, q}(X)$. From the definition of $Tu$ it is easy to check that
$Tu=L_Tu$ for $u\in\Omega^{0, q}(X)$, where $L_Tu$ is the Lie derivative
of $u$ along the direction $T$.

Let $\overline\partial_b:\Omega^{0,q}(X)\rightarrow\Omega^{0,q+1}(X)$
be the tangential Cauchy-Riemann operator. It is straightforward from (\ref{a}) and (\ref{b}) to see that
\begin{equation}\label{c}
T\overline\partial_b=\overline\partial_bT~\text{on}~\Omega^{0,q}(X).
\end{equation}
For every $m\in\mathbb Z$, put $\Omega^{0,q}_m(X):=\{u\in\Omega^{0,q}(X): Tu=imu\}$.
From (\ref{c}) we have the $\ddbar_b$-complex for every $m\in\mathbb Z$:
\begin{equation}\label{e-gue140903VI}
\ddbar_b:\ldots\To\Omega^{0,q-1}_m(X)\To\Omega^{0,q}_m(X)\To\Omega^{0,q+1}_m(X)\To\ldots.
\end{equation}
For every $m\in\mathbb Z$, the $m$-th Fourier component of Kohn-Rossi cohomology is defined as follows
\begin{equation}\label{a8}
H^{q}_{b,m}(X):=\frac{{\rm Ker\,}\ddbar_{b}:\Omega^{0,q}_m(X)\To\Omega^{0,q+1}_m(X)}{\operatorname{Im}\ddbar_{b}:\Omega^{0,q-1}_m(X)\To\Omega^{0,q}_m(X)}\,\cdot
\end{equation}

\begin{definition}
We say that a function $u\in C^\infty(X)$ is a Cauchy-Riemann (CR for short) function
if $\overline\partial_bu=0$, or in the other words,
$\overline Zu=0$ for all $Z\in\Gamma(T^{1, 0}X)$.
\end{definition}

For $m\in\mathbb Z$, when $q=0$, $H^0_{b, m}(X)$ is a subspace of the space of CR functions which lie in the $im$ eigenspace of $T$  and we call $H^0_{b, m}(X)$ the $m$-th Fourier component of the space of CR functions.

\subsection{Canonical local coordinates}\label{q}

In this work, we need the following result due to Baouendi-Rothschild-Treves.

\begin{theorem}\label{e-can-160301}\cite[Proposition I.2]{BRT85}
Let $X$ be a compact CR manifold of ${\rm dim}X=2n-1, n\geq2$ with a transversal CR $S^1$-action. For  $x_0\in X$, there exist local coordinates $(x_1,\ldots,x_{2n-1})=(z,\theta)=(z_1,\ldots,z_{n-1},\theta), z_j=x_{2j-1}+ix_{2j},j=1,\ldots,n-1, x_{2n-1}=\theta$, defined in a small neighborhood $D=\{(z, \theta): |z|<\varepsilon, |\theta|<\delta\}$ centered at $x_0$ such that
\begin{equation}\label{e-can}
\begin{split}
&T=\frac{\partial}{\partial\theta}\\
&Z_j=\frac{\partial}{\partial z_j}+i\frac{\partial\varphi(z)}{\partial z_j}\frac{\partial}{\partial\theta},j=1,\ldots,n-1
\end{split}
\end{equation}
where $\{Z_j(x)\}_{j=1}^{n-1}$ form a basis of $T_x^{1,0}X$ for each $x\in D$, and $\varphi(z)\in C^\infty(D,\mathbb R)$ is independent of $\theta$.
\end{theorem}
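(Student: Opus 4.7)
The plan is to construct the coordinates in three stages: straighten $T$ using its flow, produce complex coordinates on a transverse slice via Newlander--Nirenberg, and finally shift $\theta$ by a real function of $z$ to bring the $T^{1,0}X$ basis into canonical form.

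First, using transversality of $T$ and local freeness of the action, I would pick a $(2n-2)$-dimensional slice $S\ni x_0$ transverse to $T$ and use the flow of $T$ to furnish coordinates $(y_1,\dots,y_{2n-2},\theta)$ on a neighborhood of $x_0$ in which $T=\partial/\partial\theta$ and $S=\set{\theta=0}$. Since the $S^1$-action is CR, $T$ preserves $T^{1,0}X$, and hence the projection $\pi:\mathbb CT_pX\to\mathbb CT_pS$ along $\mathbb CT$ sends $T^{1,0}X|_S$ isomorphically onto a complex rank $(n-1)$ subbundle $V\subset\mathbb CTS$. The CR integrability of $T^{1,0}X$ descends to $[V,V]\subset V$, so by Newlander--Nirenberg one may choose complex coordinates $z_1,\dots,z_{n-1}$ on $S$ with $T^{1,0}S=V$, and then extend each $z_j$ to the whole neighborhood by requiring $T$-invariance (constancy along flow lines).

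Next I would pick the basis of $T^{1,0}X|_S$ whose projections under $\pi$ are $\partial/\partial z_j$; such a basis has the shape $L_j^S=\partial/\partial z_j+c_j(z)\,\partial/\partial\theta$. Extending the $L_j^S$ along the flow of $T$ and using that the coordinate frame $(\partial/\partial z_j,\partial/\partial\bar z_j,\partial/\partial\theta)$ is itself $T$-invariant yields $L_j=\partial/\partial z_j+c_j(z)\,\partial/\partial\theta$ on the entire neighborhood, with $c_j$ independent of $\theta$. A direct computation gives $[L_j,L_k]=(\partial c_k/\partial z_j-\partial c_j/\partial z_k)\,\partial/\partial\theta$, and the CR integrability $[L_j,L_k]\in\Gamma(T^{1,0}X)$ forces $\partial c_k/\partial z_j=\partial c_j/\partial z_k$, i.e.\ the $(1,0)$-form $\sum c_j\,dz_j$ is $\partial$-closed on a polydisc in $\mathbb C^{n-1}$.

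Finally, passing to the $\bar\partial$-closed conjugate $(0,1)$-form $\sum\bar c_j\,d\bar z_j$ and invoking the Dolbeault lemma on the polydisc, I would find $g$ with $\partial g/\partial\bar z_j=\bar c_j$, whence $h:=\bar g$ satisfies $\partial h/\partial z_j=c_j$. Writing $h=r+i\varphi$ with $r,\varphi\in C^\infty(D,\mathbb R)$ and changing coordinates by $\tilde\theta=\theta-r(z)$ leaves $T=\partial/\partial\tilde\theta$ unchanged, while the frame becomes $L_j=\partial/\partial z_j+i(\partial\varphi/\partial z_j)\,\partial/\partial\tilde\theta$; renaming $\tilde\theta$ back to $\theta$ delivers the canonical form. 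The main obstacle is precisely this last step: integrability alone only supplies a $\partial$-closed form, so one must solve a $\partial$-equation and then split the resulting complex potential into real and imaginary parts, using a $\theta$-translation to absorb the real part. Everything else is a routine combination of the $T$-flow construction, Newlander--Nirenberg on $S$, and the $T$-invariance of the CR bundle.
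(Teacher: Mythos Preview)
The paper does not give its own proof of this statement; it is quoted verbatim from \cite[Proposition I.2]{BRT85} and used as a black box. Your three-stage outline (straighten $T$, apply Newlander--Nirenberg on a transverse slice to obtain $z$-coordinates, then absorb the real part of a $\partial$-primitive by a $\theta$-shift) is correct and is essentially the argument of Baouendi--Rothschild--Treves. Two minor remarks: the ``hence'' after ``$T$ preserves $T^{1,0}X$'' is misplaced, since the injectivity of $\pi$ on $T^{1,0}X$ comes from transversality ($T^{1,0}X\cap\mathbb CT=0$), not from the CR condition; and for the Newlander--Nirenberg step you should also note $V\cap\overline V=0$ and $V\oplus\overline V=\mathbb CTS$, which follow from $T^{1,0}X\cap T^{0,1}X=0$ together with a dimension count. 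Otherwise the sketch is complete.
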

We call $D$ a canonical local patch, $x=(z, \theta)$ canonical local coordinates on $D$ and $\{Z_j\}_{j=1}^{n-1}$ a canonical frame of $T^{1, 0}X$ over $D$. On $D$, the contact form is given by $$\omega_0=d\theta-i\sum_{j=1}^{n-1}\frac{\partial\varphi(z)}{\partial z_j}dz_j+i\sum_{j=1}^{n-1}\frac{\partial\varphi(z)}{\partial\overline z_j}d\overline z_j$$
and the Levi form on $T^{1, 0}X$ can be expressed as
\begin{equation}
\mathcal L_x=-id\omega_0=2\sum_{k, j=1}^{n-1}\frac{\partial^2\varphi(z)}{\partial z_k\partial\overline z_j}dz_k\wedge d\overline z_j.
\end{equation}

For $x\in D$, $\theta\in [0, 2\pi)$ with $e^{i\theta}\circ x\in D$, it is straightforward to see that $de^{i\theta}(Z_j(x))=Z_j(e^{i\theta}\circ x)$ for $ 1\leq j\leq n-1.$
\subsection{Hermitian CR geometry}
\begin{definition}\cite[{Definition 1.18}]{H14}
Let $D$ be an open set and let $V\in C^\infty(D, \mathbb CTX)$ be a vector field on $D$. We say that $V$ is rigid if
\begin{equation}
de^{i\theta}(V(x))=V(e^{i\theta}\circ x)
\end{equation}
for any $x, \theta\in[0,2\pi)$ satisfying $x\in D, e^{i\theta}\circ x\in D.$
\end{definition}
The canonical frame $\{Z_j\}_{j=1}^{n-1}$ defined  in (\ref{e-can}) are rigid vector fields on the canonical local patch. Let $D$ be an open subset of $X$ and $U$ be a rigid vector field on $D$. Then for any $\theta_0\in[0, 2\pi)$, $de^{i\theta_0}(U)$ is still a rigid vector field on $e^{i\theta_0} D:=\{e^{i\theta_0}\circ x: x\in D\}.$
\begin{definition}\cite[Definition 1.19]{H14}
Let $\langle\cdot\,|\,\cdot\rangle$ be a Hermitian metric on $\mathbb CTX$.
We say that $\langle\cdot\,|\,\cdot\rangle$ is rigid if for rigid vector fields $V, W$ on $\Omega$, where $\Omega$ is any open set, we have
\begin{equation}
\langle V(x)|W(x)\rangle=\langle (de^{i\theta}V)(e^{i\theta}\circ x)|(de^{i\theta}W)(e^{i\theta}\circ x)\rangle, \forall x\in \Omega, \theta\in[0,2\pi).
\end{equation}
\end{definition}
\begin{lemma}
The Webster Hermitian metric $\langle\,\cdot\,|\,\cdot\,\rangle_g$ defined in (\ref{e-gue160410}) is a rigid Hermitian metric on $\mathbb CTX$.
\end{lemma}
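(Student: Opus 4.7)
The plan is to show that the $S^1$-action preserves all the ingredients of the Webster metric, namely the contact form $\omega_0$, the complex structure $J$ on $HX$, and the Reeb vector field $T$, and then deduce rigidity from the definition of rigid vector fields.

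First I would verify that $\omega_0$ is $S^1$-invariant, i.e.\ $(e^{i\theta})^*\omega_0=\omega_0$. This follows immediately from the defining relations $\langle\omega_0,T\rangle=1$, $\langle\omega_0,HX\rangle=0$, together with $de^{i\theta}(T)=T$ and $de^{i\theta}(HX)=HX$ coming from \eqref{a}. Differentiating in $\theta$ this gives $L_T\omega_0=0$, hence also $L_T(d\omega_0)=0$, so $d\omega_0$ is $S^1$-invariant as well. Next, by Lemma~\ref{l-160302} we have $L_TJ=0$ on $HX$, which is equivalent to $de^{i\theta}\circ J=J\circ de^{i\theta}$ on $HX$, i.e.\ $J$ is $S^1$-invariant.

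Combining these, I would show that $g$ itself is $S^1$-invariant: for $U,V\in H_xX$,
\begin{equation*}
g_{e^{i\theta}\circ x}(de^{i\theta}U,de^{i\theta}V)=-d\omega_0(J\,de^{i\theta}U,de^{i\theta}V)=-((e^{i\theta})^*d\omega_0)(JU,V)=-d\omega_0(JU,V)=g_x(U,V),
\end{equation*}
while the conditions $g(T,T)=1$ and $g(T,HX)=0$ are automatically preserved since $de^{i\theta}(T)=T$ and $de^{i\theta}(HX)=HX$. Extending $g$ complex linearly to $\mathbb{C}TX$ keeps it $S^1$-invariant.

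Finally, given rigid vector fields $V,W$ on an open set $\Omega$, I use $de^{i\theta}(V(x))=V(e^{i\theta}\circ x)$ (and similarly for $W$ and its complex conjugate, noting that $de^{i\theta}$ is real) to compute
\begin{equation*}
\langle V(e^{i\theta}\circ x)\,|\,W(e^{i\theta}\circ x)\rangle_g = g\bigl(de^{i\theta}V(x),\overline{de^{i\theta}W(x)}\bigr)= g\bigl(V(x),\overline{W(x)}\bigr)=\langle V(x)\,|\,W(x)\rangle_g,
\end{equation*}
which is exactly the rigidity of $\langle\,\cdot\,|\,\cdot\,\rangle_g$. The only genuine input beyond the definitions is Lemma~\ref{l-160302}; the rest is bookkeeping about what the $S^1$-action preserves, so there is no real obstacle.
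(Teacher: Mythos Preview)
Your argument is correct and is in fact cleaner than the route taken in the paper. The paper proves rigidity by working in BRT canonical coordinates: it writes rigid $(1,0)$-vector fields $U,V$ in terms of the canonical frame $\{Z_j\}$, observes that the coefficients are $\theta$-independent, and then uses that in these coordinates the Levi form $g_{j\bar k}=2\,\partial^2\varphi/\partial z_j\partial\bar z_k$ is independent of $\theta$ to obtain the rigidity identity for $|\theta|<\delta$. It then needs a separate connectedness/bootstrapping argument to pass from small $\theta$ to all $\theta\in[0,2\pi)$.

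You bypass both the local coordinates and the bootstrapping by proving directly that the metric is globally $S^1$-invariant: $(e^{i\theta})^*\omega_0=\omega_0$ (hence $(e^{i\theta})^*d\omega_0=d\omega_0$) and $de^{i\theta}\circ J=J\circ de^{i\theta}$ (from $L_TJ=0$ and the fact that $e^{i\theta}$ is the flow of $T$) immediately give $(e^{i\theta})^*g=g$ on $HX$, and the $T$-part is trivial. Rigidity then drops out from the definition of rigid vector fields. Your approach also handles all rigid vector fields in $\mathbb CTX$ at once, whereas the paper treats only $(1,0)$-fields explicitly and leaves the extension to $T^{0,1}X$ and $T$ implicit via the orthogonal decomposition. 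The paper's coordinate computation is essentially a local verification of the same invariance, so the two proofs have the same content; yours just packages it more invariantly.
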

\begin{proof}
\underline{}Let $\Omega$ be an open subset of $X$ and $U, V\in T^{1, 0}X$ be rigid vector fields on $\Omega$. For any $x_0\in \Omega$, choose  canonical  coordinates $x=(z, \theta)$ centered at $x_0$ and a canonical local patch $D=\{(z, \theta): |z|<\varepsilon, |\theta|<\delta\}$ with $\overline D\subset \Omega.$ Let $\{Z_j\}_{j=1}^{n-1}$ be a canonical frame over $D$. Then on $D$, $U=\sum_{j=1}^{n-1}a_j(z, \theta)Z_j$ and  $V=\sum_{j=1}^{n-1}b_j(z, \theta) Z_j.$ Since $U, V$ are rigid vector fields we have that on $D$, $\frac{\partial}{\partial\theta}a_j(z, \theta)=\frac{\partial}{\partial\theta}b_j(z, \theta)=0 $ for $1\leq j\leq n-1.$ Then for $|\theta|<\delta$,
\begin{equation}\label{04-30-2017-3}
\langle de^{i\theta}U(x_0)\,|\, de^{i\theta} V(x_0)\rangle_g=\sum_{j, k=1}^{n-1}a_j(0, 0)\overline b_k(0, 0)\langle de^{i\theta}Z_j(x_0)\,|\, de^{i\theta} Z_k(x_0)\rangle_g.
\end{equation}
Substituting $de^{i\theta}Z_j(x_0)=\frac{\partial}{\partial z_j}+i\frac{\partial\varphi}{\partial z_j}(0)\frac{\partial}{\partial\theta}|_{(0, \theta)}$ to (\ref{04-30-2017-3}) we have
\begin{equation}\label{ind1-30-04-2017}
\langle de^{i\theta}U(x_0)\,|\, de^{i\theta} V(x_0)\rangle_g
=\langle U(x_0)\,|\, V(x_0)\rangle_g, \forall~ |\theta|<\delta.
\end{equation}
Now, we claim that the above equality is also true for all $\theta\in[0, 2\pi)$.
Let $0<\delta_1<2\pi$ be any number
such that \begin{equation}\label{04-30-2017-4}
\langle de^{i\theta}U(x_0)\,|\, de^{i\theta} V(x_0)\rangle_g=\langle U(x_0)\,|\, V(x_0)\rangle_g, \forall~ 0\leq\theta<\delta_1.
\end{equation}
First, we show that
\begin{equation}\label{04-30-2017-2}
\langle de^{i\delta_1}U(x_0)\,|\, de^{i\delta_1} V(x_0)\rangle_g= \langle U(x_0)\,|\, V(x_0)\rangle_g.
\end{equation}
Set $U_1=de^{i\delta_1}U$, $V_1=de^{i\delta_1}V$ and $y_0=e^{i\delta_1}\circ x_0$.
Since $U_1, V_1$ are still rigid vector fields on $e^{i\delta_1}\Omega$, then by the same argument in the proof of (\ref{ind1-30-04-2017}), there exist $\sigma>0$ such that
\begin{equation}\label{04-30-2017-5}
\langle de^{i\theta}U_1(y_0)\,|\, de^{i\theta} V_1(y_0)\rangle_g= \langle U_1(y_0)\,|\,V_1(y_0)\rangle_g, ~\forall |\theta|<\sigma.
\end{equation}
Thus, by (\ref{04-30-2017-4}) and (\ref{04-30-2017-5}) we have
\begin{equation}\label{04-30-2017-6}
\langle de^{i\delta_1}U(x_0)\,|\,de^{i\delta_1} V(x_0)\rangle_g= \langle de^{i(\delta_1-\frac{\sigma}{2})}U(x_0) \,|\, de^{i(\delta_1-\frac{\sigma}{2})}V(x_0)\rangle_g=\langle U(x_0)\,|\,V(x_0)\rangle_g.
\end{equation}
Then we get the conclusion of (\ref{04-30-2017-2}). On the other hand, by (\ref{04-30-2017-5}) and (\ref{04-30-2017-6}) we have
\begin{equation}\label{04-30-2017-7}
\langle U(x_0)\,|\,V(x_0)\rangle_g=\langle de^{i(\delta_1+\varepsilon)}U(x_0)\,|\, de^{i(\delta_1+\varepsilon)}V(x_0)\rangle, ~\forall~\varepsilon\in (0, \sigma).
\end{equation}
Thus, from (\ref{04-30-2017-2}) and (\ref{04-30-2017-7}) we have
\begin{equation}\label{04-30-2017-8}
\langle de^{i\theta}U(x_0)\,|\, de^{i\theta}V(x_0)\rangle=\langle U(x_0)\,|\, V(x_0)\rangle_g, ~\forall~ 0\leq\theta<\delta_1+\sigma.
\end{equation}
Combining (\ref{04-30-2017-4}) and (\ref{04-30-2017-8}) we get the conclusion of the claim and the lemma follows.
\end{proof}
For the existence of rigid Hermitian metric on general CR manifold with $S^1$-action, we refer the readers to \cite[Theorem 9.2]{H14}.

From now on, we will fix the Webster Hermitian metric as a rigid Hermitian metric on $\mathbb CTX$. For convenience, we use the notation $\langle\cdot\,|\,\cdot\rangle$ to denote $\langle\cdot\,|\,\cdot\rangle_g$. The rigid Hermitian  metric $\langle\cdot\,|\,\cdot\rangle$ on
$\mathbb CTX$ induces by duality a Hermitian metric on $\mathbb CT^\ast X$ and also on the bundles of $(0,q)$-forms $\Lambda^qT^{\ast 0,1}X,$ $ q=0,1\ldots,n-1.$ We shall also denote all these induced
metrics by $\langle\cdot\,|\,\cdot\rangle$. From (\ref{e160407}) we have the pointwise orthogonal decomposition:
\begin{equation}
\mathbb CT^{\ast}X=T^{\ast1,0}X\oplus T^{\ast0,1}X\oplus\{\lambda\omega_0:\lambda\in\mathbb C\}.
\end{equation}
For every $v\in \Lambda^qT^{\ast0,1}X$, we write
$|v|^2:=\langle v|v\rangle$.
Let $(\,\cdot\,|\,\cdot\,)$ be the $L^2$ inner product on $\Omega^{0,q}(X)$ induced by $\langle\,\cdot\,|\,\cdot\,\rangle$ and let $\norm{\cdot}$ denote the corresponding norm. Then for all $u, v\in\Omega^{0,q}(X)$
\begin{equation}
(u|v)=\int_X\langle u| v\rangle dv_X,
\end{equation}
where $dv_X$ given in (\ref{volume}) is the volume form on $X$ induced by the rigid Hermitian metric.
As before, for $m\in\mathbb Z$, we denote by
\begin{equation}
\Omega_m^{0,q}(X)=\{u\in\Omega^{0,q}(X): Tu=imu\}
\end{equation}
the $im$ eigenspace of $T$.  Let $L^2_{(0, q), m}(X)$ be the completion of $\Omega^{0, q}_m(X)$ under the $L^2$ inner product.

Let $\overline\partial_{b}^\ast: \Omega^{0,q+1}(X)\rightarrow\Omega^{0,q}(X)$ be the formal adjoint of $\overline\partial_b$ with respect to $(\cdot\,|\,\cdot)$. Since the Hermitian metrics $\langle\cdot\,|\,\cdot\rangle$ are rigid, we can check that
\begin{equation}\label{d}
T\overline\partial_{b}^\ast=\overline\partial_{b}^\ast T~\text{on}~\Omega^{0,q}(X), \forall q=1,\ldots,n-1
\end{equation}
and from (\ref{d}) we have
\begin{equation}\label{e}
\overline\partial_{b}^\ast:\Omega_m^{0,q+1}(X)\rightarrow\Omega_m^{0,q}(X),\forall m\in\mathbb Z.
\end{equation}
Put
$$\Box_{b}^{(q)}:=\overline\partial_b\overline\partial_{b}^\ast
+\overline\partial_{b}^\ast\overline\partial_b:\Omega^{0,q}(X)\rightarrow\Omega^{0,q}(X).$$
Combining (\ref{c}), (\ref{d}) and (\ref{e}), we have
\begin{equation}\label{a9}
T\Box_{b}^{(q)}=\Box^{(q)}_{b}T~\text{on}~\Omega^{0,q}(X), \forall q=0,1,\ldots,n-1.
\end{equation}
A direct consequence of (\ref{a9}) is
\begin{equation}
\Box_{b}^{(q)}:\Omega_m^{0,q}(X)\rightarrow\Omega_m^{0,q}(X),\forall m\in\mathbb Z.
\end{equation}
We will write $\Box^{(q)}_{b, m}$ to denote the restriction of $\Box^{(q)}_{b}$ on $\Omega_m^{0,q}(X)$. For every $m\in\mathbb Z$, we extend $\Box^{(q)}_{b, m}$ to $L^2_{(0,q),m}(X)$ by
\begin{equation}\label{j2}
\Box^{(q)}_{b, m}:{\rm Dom}(\Box^{(q)}_{b,m})\subset L^2_{(0,q),m}(X)\rightarrow L^2_{(0,q),m}(X),
\end{equation}
where ${\rm Dom}(\Box^{(q)}_{b,m})=\{u\in L^2_{(0,q),m}(X): \Box^{(q)}_{b, m}u\in L^2_{(0,q),m}(X)~~\text{in the sense of distribution}\}$.

The following result follows from Kohn's $L^2$-estimate (see Theorem 8.4.2 in \cite{CS01}).

\begin{theorem}\label{f}
For every $s\in\mathbb{N}_0$, there exists a constant $C_{s}>0$ such that
\begin{equation}
\|u\|_{s+1}\leq C_{s}\left(\|\Box^{(q)}_{b}u\|_{s}+\|Tu\|_s+\|u\|_s\right),\forall u\in\Omega^{0,q}(X)
\end{equation}
where $\|\cdot\|_s$ denotes the standard Sobolev norm of order $s$ on $X$.
\end{theorem}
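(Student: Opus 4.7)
The plan is to prove Theorem~\ref{f} in two stages: first the base case $s=0$, which uses strict pseudoconvexity to upgrade Kohn's subelliptic $1/2$-estimate to a full gain of one derivative once $\|Tu\|_0$ is supplied; then an induction on $s$ using tangential pseudodifferential operators. This is the argument behind \cite[Thm.~8.4.2]{CS01}; I would organize it as follows.

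For the base case I would start from Kohn's fundamental $L^2$ estimate on a strictly pseudoconvex CR manifold,
\[
\|u\|_{1/2}^2 \le C\bigl((\Box_b^{(q)} u,u) + \|u\|_0^2\bigr).
\]
To upgrade the gain from $1/2$ to $1$ I would exploit strict pseudoconvexity in the sharper form $[Z_j,\bar Z_k] \equiv -i\,\mathcal L_{jk}\,T$ modulo sections of $T^{1,0}X\oplus T^{0,1}X$, with $(\mathcal L_{jk})$ positive definite. An integration by parts combined with this commutator identity gives, for $u\in\Omega^{0,q}(X)$,
\[
\sum_{j}\bigl(\|Z_j u\|_0^2 + \|\bar Z_j u\|_0^2\bigr) \le C\bigl(\|\bar\partial_b u\|_0^2 + \|\bar\partial_b^* u\|_0^2 + |(Tu,u)| + \|u\|_0^2\bigr).
\]
Since the Webster metric makes $\|u\|_1^2$ equivalent to $\sum_j\bigl(\|Z_j u\|_0^2+\|\bar Z_j u\|_0^2\bigr)+\|Tu\|_0^2+\|u\|_0^2$, and since Cauchy--Schwarz yields $(\Box_b u,u)\le\tfrac12(\|\Box_b u\|_0^2+\|u\|_0^2)$ together with $|(Tu,u)|\le\tfrac12(\|Tu\|_0^2+\|u\|_0^2)$, the case $s=0$ follows at once.

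For the inductive step I would apply the $s=0$ estimate to $\Lambda^s u$, where $\Lambda^s$ is a fixed invertible classical elliptic pseudodifferential operator of order $s$ on $X$, chosen to commute with $T$ (this is possible since $T$ generates a smooth $S^1$-action and one may build $\Lambda^s$ out of a $T$-invariant Laplacian). Writing $\Box_b\Lambda^s = \Lambda^s\Box_b + [\Box_b,\Lambda^s]$ reduces matters to controlling $\|[\Box_b,\Lambda^s]u\|_0$. This is the main obstacle: the commutator is nominally of order $s+1$, so a naive bound $\|[\Box_b,\Lambda^s]u\|_0\le C\|u\|_{s+1}$ would make the argument circular. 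The remedy I would adopt, which is the technical heart of the proof in \cite{CS01}, is to replace $\Lambda^s$ by a Friedrichs-mollified family $\Lambda^s_\epsilon$ and to invoke the subelliptic $\Box_b$-calculus, in which $[\Box_b,\Lambda^s_\epsilon]$ gains a horizontal half-derivative. The resulting contribution can then be bounded by $\epsilon\|u\|_{s+1}+C_\epsilon\bigl(\|\Box_b u\|_s+\|Tu\|_s+\|u\|_s\bigr)$. Absorbing the $\epsilon\|u\|_{s+1}$ term into the left-hand side and letting $\epsilon\to 0$ yields the inequality for $u\in\Omega^{0,q}(X)$ and completes the induction on $s$.
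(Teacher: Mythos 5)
The paper itself offers no proof of Theorem~\ref{f}: it is stated as a consequence of Kohn's estimates with a pointer to \cite[Theorem~8.4.2]{CS01}, and your proposal is a reconstruction of precisely that standard argument, so in substance you are following the same route as the source the authors rely on. The core of your base case is sound: the identity $[Z_j,\overline Z_k]\equiv c_{j\overline k}T$ modulo $\Gamma(T^{1,0}X\oplus T^{0,1}X)$, integration by parts, and the fact that $\{Z_j,\overline Z_j,T\}$ frame $\mathbb CTX$ together give $\|u\|_1^2\leq C\bigl(\|\Box^{(q)}_bu\|^2+\|Tu\|^2+\|u\|^2\bigr)$, and your inductive scheme (apply the base case to $\Lambda^su$ and control the commutator) is the right one.

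Two points deserve correction or sharpening. First, the subelliptic estimate $\|u\|_{1/2}^2\leq C\bigl((\Box^{(q)}_bu\,|\,u)+\|u\|^2\bigr)$ that you quote as your starting point is \emph{false} for $q=0$ and $q=n-1$ on a strictly pseudoconvex CR manifold, since condition $Y(q)$ fails there (for $q=0$ the kernel of $\Box^{(0)}_b$ contains all CR functions, which form an infinite-dimensional space). The theorem is stated for all $q$ and the paper uses it essentially at $q=0$ (via Theorem~\ref{g} and the Hodge theory for $\Box^{(0)}_{b,m}$), so if your proof depended on that estimate it would be a genuine gap. Fortunately it does not: the inequality you actually use, $\sum_j(\|Z_ju\|^2+\|\overline Z_ju\|^2)\leq C\bigl(\|\overline\partial_bu\|^2+\|\overline\partial_b^{\,*}u\|^2+|(Tu\,|\,u)|+\|u\|^2\bigr)$, holds for every $q$ precisely because the term $|(Tu\,|\,u)|$ absorbs the Levi-form contribution whose sign is the whole content of $Y(q)$. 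You should delete the appeal to the $1/2$-estimate rather than rely on it. Second, your treatment of the commutator $[\Box^{(q)}_b,\Lambda^s]$ is a pointer rather than a proof; since $u\in\Omega^{0,q}(X)$ is smooth, an a priori estimate suffices and no mollification is needed. A clean way to close the induction is to pair the commutator against $D^su$ and use duality: $|([\Box^{(q)}_b,D^s]u\,|\,D^su)|\leq\|[\Box^{(q)}_b,D^s]u\|_{-1}\,\|D^su\|_{1}\leq C\|u\|_s\|u\|_{s+1}\leq\varepsilon\|u\|_{s+1}^2+C_\varepsilon\|u\|_s^2$, after which the $\varepsilon\|u\|_{s+1}^2$ term is absorbed on the left. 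With these repairs the argument is complete and coincides with the one behind the paper's citation.
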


From Theorem \ref{f}, we deduce that

\begin{theorem}\label{g}
For $m\in\mathbb Z$ and for every $s\in\mathbb N_0$, there is a constant $C_{s, m}>0$ such that
\begin{equation}\label{xx}
\|u\|_{s+1}\leq C_{s, m}\left(\|\Box^{(q)}_{b, m}u\|_s+\|u\|_s\right), \forall u\in\Omega^{0,q}_m(X).
\end{equation}
\end{theorem}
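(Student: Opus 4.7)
The plan is to deduce Theorem~\ref{g} as a direct corollary of Theorem~\ref{f} by exploiting the fact that on the $m$-th Fourier component $\Omega^{0,q}_m(X)$ the operator $T$ acts as multiplication by the scalar $im$.

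More precisely, for $u \in \Omega^{0,q}_m(X)$, the defining identity $Tu = imu$ immediately gives
\begin{equation*}
\|Tu\|_s = |m|\,\|u\|_s
\end{equation*}
for every $s\in\mathbb N_0$. Moreover, because $\Box^{(q)}_b$ restricts to $\Box^{(q)}_{b,m}$ on $\Omega^{0,q}_m(X)$ (this is precisely the content of \eqref{a9} together with the definition of $\Box^{(q)}_{b,m}$), one has $\Box^{(q)}_b u = \Box^{(q)}_{b,m}u$. Substituting these two identities into the inequality of Theorem~\ref{f} yields
\begin{equation*}
\|u\|_{s+1} \leq C_s\bigl(\|\Box^{(q)}_{b,m}u\|_s + |m|\,\|u\|_s + \|u\|_s\bigr) = C_s\bigl(\|\Box^{(q)}_{b,m}u\|_s + (|m|+1)\|u\|_s\bigr),
\end{equation*}
and setting $C_{s,m} := C_s(|m|+1)$ gives \eqref{xx}. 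There is no real obstacle here: the only subtlety worth flagging is that the constant $C_{s,m}$ genuinely depends on $m$, and this $m$-dependence is what one must later control when passing to uniform estimates in $m$ elsewhere in the paper.
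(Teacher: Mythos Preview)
Your proof is correct and is exactly the argument the paper has in mind: the paper simply states that Theorem~\ref{g} is deduced from Theorem~\ref{f}, and your substitution $\|Tu\|_s=|m|\,\|u\|_s$ for $u\in\Omega^{0,q}_m(X)$ is the only possible way to make that deduction. Your explicit identification $C_{s,m}=C_s(|m|+1)$ and the remark about the $m$-dependence are both appropriate.
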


According to Theorem~\ref{g} and a standard argument in functional analysis, we deduce the following Hodge theory for $\Box^{(q)}_{b, m}$ (see Section 3 in~\cite{CHT15}).

\begin{theorem}\label{gI}
Let $q\in\{0,1,\ldots,n-1\}$, $m\in\mathbb Z$.
$\Box^{(q)}_{b, m}:\mathrm{Dom}(\Box^{(q)}_{b, m})\subset L^2_{(0,q), m}(X)\rightarrow L^2_{(0,q), m}(X)$ is a self-adjoint operator. Set
\begin{equation}\label{h}
\mathcal{H}^q_{b, m}(X)=\left\{u\in\mathrm{Dom}(\Box^{(q)}_{b, m}):\Box^{(q)}_{b, m}u=0\right\}.
\end{equation}
Then $\mathcal H^q_{b, m}(X)$ is a finite dimensional space with
$\mathcal{H}^q_{b, m}(X)\subset\Omega^{0,q}_m(X)$  and the map
\begin{equation}\label{i}
\mathcal{H}^q_{b, m}(X)\cong H^q_{b,m}(X),\quad \alpha\mapsto[\alpha],
\end{equation}
is an isomorphism, where $[\alpha]$ is the cohomology
class of $\alpha$ in $H^q_{b,m}(X)$.
In particular,
\begin{equation}\label{j3}
\dim H^q_{b,m}(X)<\infty, \forall ~m\in\mathbb Z, \forall ~0\leq q\leq n-1.
\end{equation}
\end{theorem}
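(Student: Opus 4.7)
The plan is to deduce Theorem~\ref{gI} from the subelliptic estimate in Theorem~\ref{g}: that estimate plays here the role ellipticity plays in the classical Hodge theorem, so the argument will follow the usual pattern of regularity, finite-dimensional kernel, closed range, Hodge decomposition, and identification with cohomology. First I would establish self-adjointness. The operator $\Box^{(q)}_{b,m}$ is formally symmetric on $\Omega^{0,q}_m(X)\subset L^2_{(0,q),m}(X)$, and \eqref{xx} with $s=0$ supplies the coercivity needed to identify the distributional domain \eqref{j2} with the Friedrichs extension of the quadratic form $Q(u,v)=(\overline\partial_b u|\overline\partial_b v)+(\overline\partial_b^\ast u|\overline\partial_b^\ast v)$ on $\Omega^{0,q}_m(X)$. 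Since \eqref{c} and \eqref{d} make $\overline\partial_b$ and $\overline\partial_b^\ast$ commute with $T$, one can use $S^1$-invariant Friedrichs mollifiers so that the usual construction specialises to each Fourier mode and produces a self-adjoint realisation on $L^2_{(0,q),m}(X)$.

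Next I would establish regularity and finite-dimensionality. Iterating \eqref{xx} in $s$ gives the bootstrap: if $u\in\mathrm{Dom}(\Box^{(q)}_{b,m})$ and $\Box^{(q)}_{b,m}u\in H^s$, then $u\in H^{s+1}$, so in particular $\mathcal{H}^q_{b,m}(X)\subset\bigcap_s H^s=C^\infty(X)$, and the condition $Tu=imu$ (preserved in the $L^2$-limit defining $L^2_{(0,q),m}(X)$) lands $u$ in $\Omega^{0,q}_m(X)$. For any $L^2$-bounded sequence in $\mathcal{H}^q_{b,m}(X)$, \eqref{xx} with $s=0$ gives a uniform $H^1$-bound, so Rellich compactness produces an $L^2$-convergent subsequence; hence the unit ball of $\mathcal{H}^q_{b,m}(X)$ is relatively compact and $\dim\mathcal{H}^q_{b,m}(X)<\infty$.

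The final steps are closed range and the identification with cohomology. A standard compactness-contradiction argument using \eqref{xx} applied to $u\perp\mathcal{H}^q_{b,m}(X)$ yields an estimate $\|u\|_0\leq C_m\|\Box^{(q)}_{b,m}u\|_0$, so the range of $\Box^{(q)}_{b,m}$ is closed and self-adjointness gives $L^2_{(0,q),m}(X)=\mathcal{H}^q_{b,m}(X)\oplus\mathrm{Range}(\Box^{(q)}_{b,m})$. Let $P_m$ denote the orthogonal projection onto $\mathcal{H}^q_{b,m}(X)$ and $N$ the Green operator with $\Box^{(q)}_{b,m}N=\mathrm{Id}-P_m$ and $N\mathcal{H}^q_{b,m}(X)=0$; this yields the Kodaira-type decomposition $\alpha=P_m\alpha+\overline\partial_b(\overline\partial_b^\ast N\alpha)+\overline\partial_b^\ast(\overline\partial_b N\alpha)$ on $\Omega^{0,q}_m(X)$. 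Injectivity of $\alpha\mapsto[\alpha]$ follows from $\|\alpha\|^2=(\alpha|\overline\partial_b\beta)=(\overline\partial_b^\ast\alpha|\beta)=0$ once one notes that $(\Box^{(q)}_{b,m}\alpha|\alpha)=\|\overline\partial_b\alpha\|^2+\|\overline\partial_b^\ast\alpha\|^2=0$ forces $\overline\partial_b^\ast\alpha=0$; surjectivity follows from the above decomposition, whose last term vanishes whenever $\overline\partial_b\alpha=0$ (take the inner product with $\overline\partial_b N\alpha$).

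The main technical hurdle will be the self-adjointness step: the distributional realisation \eqref{j2} a priori gives only a closed symmetric operator, and one must check that it coincides with the Friedrichs extension so that it is genuinely self-adjoint. Once this identification is in place---which is routine given that \eqref{xx} provides the necessary regularity and the $S^1$-invariance of the metric and operators lets us work one Fourier mode at a time---every subsequent step is a mechanical adaptation of the classical Hodge-theoretic arguments.
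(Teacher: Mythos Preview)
Your proposal is correct and follows exactly the approach the paper indicates: the paper does not give its own proof of Theorem~\ref{gI} but simply states that it follows from the subelliptic estimate of Theorem~\ref{g} by ``a standard argument in functional analysis'' and refers to Section~3 of \cite{CHT15}. What you have written is precisely that standard argument spelled out in detail, and your identification of the self-adjointness step as the only nontrivial technical point is on the mark.
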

We call $\mathcal H^q_{b, m}(X)$ the harmonic space with respect to $\Box^{(q)}_{b, m}$.

\subsection{Tanaka-Webster connection}Let $(X, HX, J)$ be a compact strictly pseudoconvex CR manifold with a transversal CR $S^1$-action. Let $T$ be the globally real vector field induced  by the $S^1$-action and $\omega_0$ be its dual form. Then it is easy to check that $\omega_0$ is a contact form with $HX$ as the contact structure and  $T$, $\omega_0$ satisfy
\begin{equation}
\langle\omega_0,T\rangle=1,~\langle\omega_0, HX\rangle=0,~ T\rfloor d\omega_0=0.
\end{equation}
In this section, with the notations defined above, we will review the Tanaka-Webster connection \cite{T75, W78} and the notions are mainly from \cite{T75, DT06}.

\begin{proposition}[Proposition 3.1 in \cite{T75} ]\label{p-160327}
There is a unique linear connection (Tanaka-Webster connection) denoted by $\nabla: \Gamma(TX)\rightarrow \Gamma(T^\ast X\otimes TX)$ satisfying the following conditions:
\begin{enumerate}
  \item The contact structure $HX$ is parallel, i.e., $\nabla_U \Gamma(HX)\subset \Gamma(HX)$ for $U\in\Gamma(TX).$
  \item The tensor fields $T, J, d\omega_0$ are all parallel, i.e., $\nabla T=0, \nabla J=0, \nabla d\omega_0=0.$
  \item The torsion $\tau$ of $\nabla$ satisfies:
  ${\tau}(U, V)=d\omega_0(U, V)T$, ${\tau}(T, JU)=-J{\tau}(T, U)$,
  $U, V\in C^\infty(X, HX).$
\end{enumerate}
\end{proposition}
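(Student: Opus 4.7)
The plan is to adapt the classical construction of Tanaka \cite{T75}, exploiting the transversal CR $S^1$-action. First I would extend the Webster metric of \eqref{a-160227} to a full Riemannian metric $g$ on $TX$ using the orthogonal splitting $TX = HX \oplus \mathbb{R} T$ with $g(T,T)=1$ and $g(T, HX)=0$. A direct computation using $g(\,\cdot\,,\,\cdot\,) = -d\omega_0(J\,\cdot\,,\,\cdot\,)$ on $HX$ then shows that conditions (1) and (2) force $\nabla$ to be a metric connection on $(TX, g)$: $\nabla J = 0$ together with $\nabla d\omega_0 = 0$ gives metric compatibility on $HX$, and $\nabla T = 0$ plus the parallelism of $HX$ propagates this to all of $TX$.

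For uniqueness, I would invoke the standard identity relating a metric connection to its torsion $\tau$ and the Levi-Civita connection $\nabla^{LC}$ of $g$,
\begin{equation*}
2 g(\nabla_X Y, Z) = 2 g(\nabla^{LC}_X Y, Z) + g(\tau(X,Y), Z) - g(\tau(Y, Z), X) + g(\tau(Z, X), Y),
\end{equation*}
so that $\nabla$ is determined once $\tau$ is known. Condition (3) fixes $\tau$ on $HX \times HX$ directly. For the remaining component, given $U \in \Gamma(HX)$, the relations $\nabla_U T = 0$ and parallelism of $HX$ yield $\tau(T, U) = \nabla_T U - [T, U] \in \Gamma(HX)$; combining this with $\nabla J = 0$ (whence $\nabla_T (JU) = J\nabla_T U$) gives the identity $\tau(T, JU) - J\tau(T, U) = J[T, U] - [T, JU]$. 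Substituting the prescription $\tau(T, JU) = -J\tau(T, U)$ from (3) produces
\begin{equation*}
-2 J \tau(T, U) = J[T, U] - [T, JU].
\end{equation*}
By Lemma~\ref{l-160302}, $L_T J = 0$ on $HX$, i.e.\ $[T, JU] = J[T, U]$, so the right-hand side vanishes. Hence $\tau(T, U) = 0$ and $\nabla_T U = [T, U]$, establishing uniqueness.

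For existence, I would define $\nabla$ explicitly: set $\nabla_U T = 0$ for every $U \in \Gamma(TX)$ and $\nabla_T U := [T, U]$ for $U \in \Gamma(HX)$, which lies in $HX$ by the CR condition \eqref{b-160228}. For the $HX$-direction, define $\nabla_U V$ with $U, V \in \Gamma(HX)$ via the Chern connection of the Hermitian bundle $(T^{1,0}X, \langle\,\cdot\,|\,\cdot\,\rangle)$, extended to $HX = T^{1,0}X \oplus T^{0,1}X$ by complex conjugation. The torsion on $HX \times HX$ then agrees with $d\omega_0(U,V)\,T$ because $[U,V] \bmod HX = -d\omega_0(U,V)\,T$ from the contact structure and the Chern torsion kills the mixed $(1,0)$-$(0,1)$ part. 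The main obstacle is verifying that this single construction simultaneously produces all three parallelisms $\nabla J = 0$, $\nabla d\omega_0 = 0$, $\nabla T = 0$ together with the prescribed torsion; here the rigidity of the Webster Hermitian metric (with respect to the $S^1$-action) and the closedness of $d\omega_0$, combined with Lemma~\ref{l-160302}, are what make all the conditions mesh consistently.
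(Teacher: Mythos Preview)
The paper does not supply its own proof of this proposition: it is quoted verbatim as ``Proposition~3.1 in \cite{T75}'' and immediately followed by explanatory remarks rather than a proof environment. So there is no in-paper argument to compare your proposal against.

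That said, two observations on your proposal itself. First, your argument is tailored to the $S^1$-invariant situation: you invoke Lemma~\ref{l-160302} to force $\tau(T,U)=0$ and then \emph{define} $\nabla_T U=[T,U]$. But the paper explicitly remarks, right after stating the proposition, that ``the existence of an $S^1$-action on $X$ is not necessary in the definition of Tanaka-Webster connection''; Tanaka's result holds on any strictly pseudoconvex pseudohermitian manifold, with the pseudohermitian torsion $\tau(T,\cdot)$ generically nonzero. Your uniqueness computation $-2J\tau(T,U)=J[T,U]-[T,JU]$ is correct and already determines $\tau(T,U)$ from the data in the general case; there is no need to collapse it to zero via $L_TJ=0$. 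Likewise, in the general existence argument one must set $\nabla_T U = [T,U] + \tau(T,U)$ with $\tau(T,U)=\tfrac12\bigl([T,U]+J[T,JU]\bigr)$ (up to sign conventions), not simply $[T,U]$; your choice only yields $\nabla_TJ=0$ because $L_TJ=0$ in the $S^1$ setting.

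Second, the existence sketch via a ``Chern connection of $(T^{1,0}X,\langle\cdot|\cdot\rangle)$'' is underspecified: $T^{1,0}X$ is not a holomorphic bundle over a complex base, so one has to say precisely which partial $(0,1)$-operator is being used (the tangential $\overline\partial_b$ along $T^{0,1}X$) and then verify directly that the resulting horizontal torsion has vanishing $HX$-component for $(1,0)$--$(1,0)$ pairs (this uses integrability of $T^{1,0}X$) and for $(1,0)$--$(0,1)$ pairs, leaving exactly the $d\omega_0(U,V)T$ piece. This is the genuine content of Tanaka's construction, and your final paragraph acknowledges but does not resolve it.
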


Recall that $\nabla J\in\Gamma(T^\ast X\otimes\mathscr L(HX,HX))$,
$\nabla d\omega_0\in\Gamma(T^\ast X\otimes\Lambda^2(\Complex T^\ast X))$
are defined by $(\nabla_UJ)W=\nabla_U(JW)-J\nabla_UW$ and
$\nabla_Ud\omega_0(W, V)=Ud\omega_0(W, V)-d\omega_0(\nabla_UW, V)-
d\omega_0(W, \nabla_UV)$ for $U\in\Gamma(TX), W, V\in\Gamma(HX)$.
Similarly, for any $u\in\Omega^{0,q}(X)$, we can define
$\nabla u\in\Gamma(T^\ast X\otimes\Lambda^q(\Complex T^\ast X))$
in the standard way. By (1) and $\nabla J=0$ in (2), we have
$\nabla_U \Gamma(T^{1, 0}X)\subset \Gamma(T^{1, 0}X)$ and
$\nabla_U \Gamma(T^{0, 1}X)\subset \Gamma(T^{0, 1}X)$ for
$U\in \Gamma(TX).$ Moreover, $\nabla J=0$ and $\nabla d\omega_0=0$
imply that the Tanaka-Webster connection is compatible with the Webster metric.
By definition, the torsion of $\nabla$ is given by
$\tau (W, U)=\nabla_WU-\nabla_UW-[W, U]$ for $U, V\in\Gamma(TX)$ and
$\tau(T, U)$ for $U\in\Gamma(HX)$ is called pseudohermitian torsion.

The existence of an $S^1$-action on $X$ is not necessary
in the definition of Tanaka-Webster connection.
But if $X$ admits a transversal CR $S^1$-action, by (2) in
Proposition \ref{p-160327} and Lemma \ref{d-160228}, we have
$\tau(T, U)=0$ for $U\in\Gamma(HX)$.
The vanishing of pseudohermitian torsion  admits an important
geometric interpretation. Webster \cite{W78} proved that for a strictly
pseudoconvex CR manifold, the pseudohermitian torsion vanishes
if and only if the $1$-parameter group of transformations of
$X$ induced by $T$  consists of CR automorphisms.

\begin{lemma}[Lemma 3.2 in \cite{T75}]\label{l-160228}
Let $U\in\Gamma(T^{1, 0}X)$. Then $\nabla_{T}U=L_TU+J_T U$, where $L_T$ denotes the Lie derivation and $J_T$ is given by $J_T=-\frac12 J\circ L_TJ.$
\end{lemma}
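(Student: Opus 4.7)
The plan is to derive the identity from the torsion formula for the Tanaka-Webster connection and then identify the two sides as $T^{1,0}$/$T^{0,1}$ components of $L_T U$. Applying the definition $\tau(W,U) = \nabla_W U - \nabla_U W - [W,U]$ with $W = T$, together with $\nabla T = 0$ from property (2) of Proposition \ref{p-160327} (which in particular gives $\nabla_U T = 0$), I obtain
\[
\nabla_T U \;=\; L_T U + \tau(T, U).
\]
Thus the claim reduces to proving $\tau(T, U) = J_T U$ for every $U \in \Gamma(T^{1,0}X)$.

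Next I would check that both sides lie in $\Gamma(T^{0,1}X)$. Since $T$ is the Reeb vector field of the contact form $\omega_0$, Cartan's formula gives $L_T \omega_0 = \iota_T d\omega_0 + d\iota_T \omega_0 = 0$, so $L_T$ preserves $\Gamma(HX)$, and by complex linear extension $\Gamma(\mathbb C HX)$; hence $L_T U$ decomposes as $(L_T U)^{1,0} + (L_T U)^{0,1}$. Extending the torsion symmetry $\tau(T, JV) = -J\tau(T, V)$ complex linearly and applying it to $U \in \Gamma(T^{1,0}X)$, where $JU = iU$, yields $J\tau(T, U) = -i\tau(T, U)$, so $\tau(T, U) \in \Gamma(T^{0,1}X)$. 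On the other hand, the Tanaka-Webster connection preserves $T^{1,0}X$ (by property (1) combined with $\nabla J = 0$), hence $\nabla_T U \in \Gamma(T^{1,0}X)$. Comparing types in $\nabla_T U = L_T U + \tau(T, U)$ then forces
\[
(L_T U)^{1,0} = \nabla_T U, \qquad (L_T U)^{0,1} = -\tau(T, U).
\]

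It remains to compute $J_T U$ explicitly. For $U \in \Gamma(T^{1,0}X)$, using $J = i$ on $T^{1,0}X$ and $J = -i$ on $T^{0,1}X$,
\[
(L_T J)(U) = L_T(JU) - J(L_T U) = iL_T U - J(L_T U) = 2i\,(L_T U)^{0,1}.
\]
Therefore
\[
J_T U = -\tfrac12 J(L_T J)(U) = -i\,J(L_T U)^{0,1} = -(L_T U)^{0,1} = \tau(T, U),
\]
and combining this with the torsion identity of the first step finishes the proof.

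The main (modest) obstacle is merely bookkeeping: keeping track of the two complex linear extensions (of $J$ and of $\tau(T,\,\cdot\,)$) and of the signs coming from $J$ acting as $\pm i$ on the eigenspaces $T^{\pm1,0}X$. Everything else is a direct consequence of the defining properties of the Tanaka-Webster connection stated in Proposition \ref{p-160327}, so no further machinery is needed.
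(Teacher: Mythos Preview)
Your argument is correct. The paper does not actually supply a proof of this lemma; it merely quotes it from \cite{T75}. Your derivation is a clean, self-contained substitute: you correctly extract $\nabla_T U = L_T U + \tau(T,U)$ from $\nabla T = 0$ and the torsion definition, use the torsion symmetry $\tau(T,JU)=-J\tau(T,U)$ (extended complex linearly) to force $\tau(T,U)\in\Gamma(T^{0,1}X)$, and then identify $\tau(T,U)$ with $-(L_TU)^{0,1}=J_TU$ by comparing types. The only point worth making explicit is that $\tau(T,U)$ indeed lies in $\mathbb C HX$ (so that applying $J$ and speaking of its $(0,1)$-part makes sense); this follows since both $\nabla_T U$ and $L_T U$ do, the latter because $T\rfloor d\omega_0=0$ and $\omega_0(T)=1$ give $L_T\omega_0=0$, as you noted.
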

Since the $S^1$-action on $X$ is CR, by (\ref{b-160228}), the CR structure on $X$ is invariant with respect to the $S^1$-action, that is,  $L_T J=0.$ By Lemma \ref{l-160228} and since $L_TJ=0$, we have
\begin{lemma}\label{d-160228}
Let $X$ be a compact strictly pseudoconvex CR manifold with a transversal CR $S^1$-action. Let $\nabla$ be the Tanaka-Webster connection on $TX$. Then we have
\begin{equation}
J_TU=0~\text{and}~\nabla_TU=L_TU~\text{for}~U\in\Gamma(T^{1, 0}X),
\end{equation}
where $T$ denotes the induced vector field by the $S^1$-action and $J$ is the CR structure tensor on $X$.
\end{lemma}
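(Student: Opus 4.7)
The proof is essentially a one-line consequence of the two preceding results: Lemma \ref{l-160302} gives $L_T J = 0$ on $HX$, and Lemma \ref{l-160228} (quoted from Tanaka) expresses the Tanaka-Webster covariant derivative in the $T$-direction as $\nabla_T U = L_T U + J_T U$ with $J_T = -\tfrac{1}{2} J \circ L_T J$. So the plan is simply to combine them.

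First I would note that, since the $S^1$-action is CR, equation \eqref{b-160228} gives $[T,\Gamma(T^{1,0}X)] \subset \Gamma(T^{1,0}X)$, which is exactly the statement that the $i$-eigenbundle of $J$ is preserved by $L_T$. Together with complex conjugation this also gives $L_T \Gamma(T^{0,1}X) \subset \Gamma(T^{0,1}X)$. Lemma \ref{l-160302} then packages this as $L_T J = 0$ as an endomorphism of $HX$. Substituting into the definition $J_T = -\tfrac{1}{2} J \circ L_T J$ gives $J_T \equiv 0$ on $HX$, and in particular $J_T U = 0$ for every $U \in \Gamma(T^{1,0}X)$ (regarding $T^{1,0}X$ as a subbundle of $\mathbb C HX$ after complex-linear extension).

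Having $J_T U = 0$, the identity $\nabla_T U = L_T U + J_T U$ from Lemma \ref{l-160228} immediately collapses to $\nabla_T U = L_T U$, which is the second claim. Since $L_T U \in \Gamma(T^{1,0}X)$ by the CR property of the action, the right-hand side indeed lies in $\Gamma(T^{1,0}X)$, consistent with the fact that $\nabla$ preserves $T^{1,0}X$ (established in the paragraph after Proposition \ref{p-160327} from (1)–(2)).

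There is no real obstacle here; the only thing to be careful about is the complex-linear extension of $J$ and $L_T J$ from $HX$ to $\mathbb C HX$, and the fact that Lemma \ref{l-160302} is stated on $HX$ while Lemma \ref{l-160228} is applied to $U \in \Gamma(T^{1,0}X) \subset \Gamma(\mathbb C HX)$. Since $L_T$ and $J$ both commute with complex-linear extension, the vanishing of $L_T J$ on $HX$ immediately yields its vanishing on $\mathbb C HX$, so the argument goes through verbatim.
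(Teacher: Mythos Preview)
Your proposal is correct and matches the paper's own argument essentially verbatim: the paper simply invokes $L_TJ=0$ (Lemma~\ref{l-160302}) together with Tanaka's formula $\nabla_T U=L_TU+J_TU$ with $J_T=-\tfrac12 J\circ L_TJ$ (Lemma~\ref{l-160228}) to conclude. Your additional remark about complex-linear extension is a harmless clarification, not a deviation.
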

Since $\nabla_Tu\in\Omega^{0, q}(X)$ for $u\in\Omega^{0, q}(X)$, then for any smooth sections $U_1, \ldots, U_q\in\Gamma(T^{0, 1}X)$ we have
\begin{equation*}
\begin{split}
(\nabla_Tu)(\overline U_1, \ldots, \overline U_q)&=T(u(\overline U_1, \ldots, \overline U_q))-\sum_{j=1}^qu((\overline U_1, \ldots, \nabla_T\overline U_j, \ldots, \overline U_q))\\
&=T(u(\overline U_1, \ldots, \overline U_q))-\sum_{j=1}^qu((\overline U_1, \ldots, L_T\overline U_j, \ldots, \overline U_q))\\
&=(L_Tu)(\overline U_1, \ldots, \overline U_q).
\end{split}
\end{equation*}
Thus, we  have $\nabla_Tu=L_Tu$ for $u\in\Omega^{0, q}(X)$.

Let $R$ be the curvature of Tanaka-Webster connection. Let $e_1, \ldots, e_{n-1}$ be any orthonormal basis of $T^{1, 0}X$ with respect to the fixed rigid Hermitian metric $\langle\cdot\,|\,\cdot\rangle$, that is, $\langle e_i| e_j\rangle=\delta_{ij}.$ Then the Ricci curvature operator $R_{\ast}$ is defined by (page 34 in \cite{T75})
\begin{equation}\label{f-160407}
R_{\ast}U=-i\sum_{k=1}^{n-1}R(e_k, \overline e_k)JU,~~ U\in\Gamma(HX).
\end{equation}
By duality, we can extend the Ricci operator $R_\ast$ to $\Omega^{0, q}(X)$ in the following way
\begin{equation}\label{g-160407}
R_\ast u(\overline U_1, \ldots, \overline U_{q})=\sum_{j=1}^qu(\overline U_1, \ldots, R_\ast \overline U_j, \ldots, \overline U_q)
\end{equation}
for all $u\in\Omega^{0, q}(X)$ and $U_1, \ldots, U_q\in \Gamma(T^{1, 0}X).$
It is straightforward to check that $R_\ast$ is a self-adjoint operator with respect to the
inner product $\langle\cdot\,|\,\cdot\rangle$ on $\Omega^{0, q}(X).$
\subsection{Pseudohermitian geometry}Let $\{Z_\alpha\}_{\alpha=1}^{n-1}$ be the canonical frame of $T^{1, 0}X$ on a canonical open set $D$ in the BRT trivialization given in Theorem \ref{e-can-160301}. Then $\{dz_\alpha\}_{\alpha=1}^{n-1}$ is a dual frame of $\{Z_\alpha\}_{\alpha=1}^{n-1}$. Write $Z_{\overline\alpha}=\overline{Z_\alpha}$, $\theta^\alpha=dz_{\alpha}$, $\theta^{\overline\alpha}=\overline{\theta^\alpha}.$ Then $\{\theta^\alpha\}$ is an admissible coframe on $D$. Then $d\omega_0=ig_{\alpha\overline\beta}\theta^\alpha\wedge\theta^{\overline\beta}$, where $g_{\alpha\overline\beta}=2\frac{\partial^2\varphi(z)}{\partial z_\alpha\partial\overline z_\beta}.$ Let $\omega^{\beta}_{\alpha}$ be the connection form of Tanaka-Webster connection with respect to the frame $\{Z_\alpha\}_{\alpha=1}^{n-1}.$ Thus, we have $$\nabla Z_\alpha=\omega_{\alpha}^\beta Z_\beta, \nabla Z_{\overline\alpha}=\omega_{\overline\alpha}^{\overline\beta} Z_{\overline\beta}, \nabla T=0.$$ By direct calculation, \begin{equation} \omega_{\alpha}^\beta=g^{\overline\sigma\beta}\partial g_{\alpha\overline\sigma} \end{equation}where $\{g^{\overline\sigma\beta}\}$ is the inverse matrix of $\{g_{\alpha\overline\beta}\}$. We denote by $\Theta_{\alpha}^\beta$ the Tanaka-Webster curvature form. Since the pseudohermitian torsion vanishes, we have $\Theta_{\alpha}^\beta=d\omega_{\alpha}^\beta-\omega_{\alpha}^\gamma\wedge\omega_\gamma^\beta.$ It is easy to check that
$\Theta_{\alpha}^\beta=R_{\alpha\ j\overline k}^{\ \ \beta}\theta^j\wedge\theta^{\overline k}$, where $R_{\alpha\ j\overline k}^{\ \ \beta}$ is the Tanaka-Webster curvature and by direct calculation
\begin{equation}
R_{\alpha\ j\overline k}^{\ \ \beta}\theta^j\wedge\theta^{\overline k}=-2g^{\overline\sigma\beta}\frac{\partial^4\varphi(z)}{\partial z_\alpha\partial\overline z_\sigma\partial z_j\partial\overline z_k}dz_j\wedge d\overline z_k-2\frac{\partial g_{\alpha\overline\sigma}}{\partial z_j}\frac{\partial g^{\overline\sigma\beta}}{\partial \overline z_k}dz_j\wedge d\overline z_k.
\end{equation}

\begin{proposition}[{\cite[Theorem 5.2]{T75}}]\label{c-160228}
For any $u\in\Omega^{0, q}(X)$, we have the following equalities
\begin{equation}\label{e-160228}
(\Box^{(q)}_b u| u)=\|u\|_{\overline S}^2-qi(\nabla_{T} u|u)
+(R_\ast u|u)
\end{equation}
where $\|u\|_{\overline S}^2=
-\int_X(\sum_{k=1}^{n-1}\langle\nabla_{e_k}\nabla_{\overline e_k} u|u\rangle)dv_X$.
Here, $\{e_k\}_{k=1}^{n-1}$ is any orthonormal frame of $T^{1, 0}X$.
\end{proposition}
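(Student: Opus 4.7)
\emph{Plan.} Proposition \ref{c-160228} is a Bochner–Weitzenböck-type identity for the Kohn Laplacian, and the natural strategy is a pointwise computation using the Tanaka–Webster connection in an orthonormal frame, followed by integration against $u$. Fix a local orthonormal frame $\{e_1,\ldots,e_{n-1}\}$ of $T^{1,0}X$ with dual coframe $\{\theta^1,\ldots,\theta^{n-1}\}$, and write $\bar e_k=\overline{e_k}$, $\bar\theta^k=\overline{\theta^k}$; since $\nabla$ preserves $T^{1,0}X$, the frame may be arranged to be parallel at any chosen base point. Because $\nabla J=0$ and $\nabla d\omega_0=0$, a Leibniz-rule computation yields on $\Omega^{0,q}(X)$ the representations
\[
\bar\partial_b u=\sum_{k}\bar\theta^k\wedge\nabla_{\bar e_k}u,\qquad \bar\partial_b^{\,\ast} u=-\sum_{k}\iota_{\bar e_k}\nabla_{e_k}u,
\]
the first following from the integrability of $T^{0,1}X$ and the second from formal duality (using compactness of $X$ and Proposition \ref{p-160327}).

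\emph{Expansion of $\Box_b^{(q)}$.} Substituting into $\Box_b^{(q)}=\bar\partial_b\bar\partial_b^{\,\ast}+\bar\partial_b^{\,\ast}\bar\partial_b$ and applying the Clifford identity $\bar\theta^k\wedge\iota_{\bar e_j}+\iota_{\bar e_j}\bar\theta^k\wedge=\delta_{jk}$ on $(0,q)$-forms, one obtains
\[
\Box_b^{(q)} u = -\sum_{k}\nabla_{e_k}\nabla_{\bar e_k}u+\sum_{j,k}\bar\theta^k\wedge\iota_{\bar e_j}\bigl[\nabla_{e_j},\nabla_{\bar e_k}\bigr]u.
\]
Split the commutator as $[\nabla_{e_j},\nabla_{\bar e_k}]=R(e_j,\bar e_k)+\nabla_{[e_j,\bar e_k]}$. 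By Proposition \ref{p-160327} the torsion identity gives $[e_j,\bar e_k]=\nabla_{e_j}\bar e_k-\nabla_{\bar e_k}e_j-d\omega_0(e_j,\bar e_k)T$; with the frame parallel at the point, the first two terms vanish, while \eqref{e-160404} combined with the orthonormality of the frame yields $d\omega_0(e_j,\bar e_k)=i\delta_{jk}$, so $[e_j,\bar e_k]=-i\delta_{jk}T$. Consequently
\[
\sum_{j,k}\bar\theta^k\wedge\iota_{\bar e_j}\nabla_{[e_j,\bar e_k]}u=-i\sum_{k}\bar\theta^k\wedge\iota_{\bar e_k}\nabla_Tu=-iq\,\nabla_Tu,
\]
because $\sum_k\bar\theta^k\wedge\iota_{\bar e_k}$ acts as the degree operator on $\Lambda^qT^{\ast 0,1}X$. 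Using $\nabla_Tu=L_Tu$ from Lemma \ref{d-160228} and pairing with $u$ produces $-qi(\nabla_Tu|u)$. The curvature contribution $\sum_{j,k}\bar\theta^k\wedge\iota_{\bar e_j}R(e_j,\bar e_k)u$ is identified, via the definitions \eqref{f-160407}--\eqref{g-160407} of $R_\ast$ acting on $(0,q)$-forms as a derivation, with $R_\ast u$. Finally, pairing the first summand with $u$ gives $-\int_X\sum_k\langle\nabla_{e_k}\nabla_{\bar e_k}u\,|\,u\rangle\, dv_X=\|u\|_{\overline S}^2$, and \eqref{e-160228} follows by assembling the three pieces.

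\emph{Main obstacle.} The delicate point is the bookkeeping of the commutator $[\nabla_{e_j},\nabla_{\bar e_k}]$: one must cleanly separate the curvature from the connection-of-bracket term, invoke the vanishing of the pseudohermitian torsion (Lemma \ref{d-160228}) so that no $\tau(T,\cdot)$ contribution survives, identify the correct sign and factor $i$ in $d\omega_0(e_j,\bar e_k)$, and verify that the trace $\sum_k\bar\theta^k\wedge\iota_{\bar e_k}$ produces the multiplicity $q$ on $(0,q)$-forms. Once these are in place, the identity drops out of purely algebraic rearrangement together with a single integration by parts.
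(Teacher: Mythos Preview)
The paper does not give its own proof of Proposition~\ref{c-160228}; it is quoted verbatim as Theorem~5.2 of Tanaka~\cite{T75}, followed only by the remark~\eqref{p-161113} that $\|u\|_{\overline S}^2\geq 0$ via an integration by parts. Your sketch is the standard Bochner--Weitzenb\"ock derivation, which is precisely the route Tanaka takes, so in that sense your approach coincides with the source the paper relies on.

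Two points deserve a little more care. First, Tanaka's general identity (for an arbitrary strictly pseudoconvex pseudohermitian structure) contains an additional term involving the pseudohermitian torsion $\tau(T,\cdot)$; the version~\eqref{e-160228} quoted in the paper already uses that this torsion vanishes thanks to the transversal CR $S^1$-action (cf.\ the paragraph after Proposition~\ref{p-160327} and Lemma~\ref{d-160228}). Your argument implicitly invokes this, but it enters not only in ``no $\tau(T,\cdot)$ contribution survives'' but already in the clean formulas $\bar\partial_b u=\sum_k\bar\theta^k\wedge\nabla_{\bar e_k}u$ and $\bar\partial_b^{\,*}u=-\sum_k\iota_{\bar e_k}\nabla_{e_k}u$, which in the general case acquire torsion corrections. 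Second, the identification of $\sum_{j,k}\bar\theta^k\wedge\iota_{\bar e_j}R(e_j,\bar e_k)u$ with $R_\ast u$ as defined in \eqref{f-160407}--\eqref{g-160407} is not immediate: one must expand the derivation action of $R(e_j,\bar e_k)$ on $\Lambda^qT^{\ast 0,1}X$, use the symmetry $R_{\alpha\bar\beta\gamma\bar\delta}=R_{\gamma\bar\delta\alpha\bar\beta}$ of the pseudohermitian curvature (or the first Bianchi identity with vanishing torsion), and then take the trace to see that only the Ricci contraction survives. With those two checks made explicit your argument is complete.
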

From (2) in Proposition \ref{p-160327}, the rigid Hermitian metric $\langle\cdot|\cdot\rangle_g$ is invariant with respect the Tanaka-Webster connection $\nabla$. Integrating by parts, we have \begin{equation}\label{p-161113}
\|u\|_{\overline S}^2=
-\int_X\sum_{k=1}^{n-1}\langle\nabla_{e_k}\nabla_{\overline e_k} u|u\rangle dv_X=\sum_{k=1}^{n-1}\int_X\langle\nabla_{\overline e_k} u|\nabla_{\overline e_k}u\rangle dv_X\geq0.
\end{equation}
As a corollary of Proposition \ref{c-160228}, we have the vanishing theorem
for the Fourier components of Kohn-Rossi  cohomology.
\begin{theorem}
Let $X$ be a strictly pseudoconvex CR manifold with a locally free transversal CR $S^1$-action.
There exists $m_0>0$ such that for $q\geq1$ and any $m\in\mathbb Z$
with $m>m_0$, we have $H^q_{b, m}(X)=0.$
\end{theorem}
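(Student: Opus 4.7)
The plan is to reduce the vanishing of $H^q_{b,m}(X)$ to the vanishing of the harmonic space $\mathcal H^q_{b,m}(X)$, and then to exploit the Bochner-type identity of Proposition~\ref{c-160228} in combination with the spectral information coming from the $S^1$-action.

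By the Hodge isomorphism \eqref{i} in Theorem~\ref{gI}, it suffices to show that $\mathcal H^q_{b,m}(X)=0$ for all $q\geq 1$ and all $m$ larger than some $m_0$. Fix $q\geq 1$, $m\in\mathbb Z$, and let $u\in\mathcal H^q_{b,m}(X)$, so that $u\in\Omega^{0,q}_m(X)$ and $\Box^{(q)}_b u=0$. Since $u\in\Omega^{0,q}_m(X)$ we have $Tu=im\,u$, and the identity $\nabla_T u=L_T u=Tu$ established just after Lemma~\ref{d-160228} gives
\begin{equation*}
-q\,i\,(\nabla_T u\,|\,u)=-q\,i\,(im\,u\,|\,u)=qm\,\|u\|^2.
\end{equation*}
Plugging this into \eqref{e-160228} and using $(\Box^{(q)}_b u\,|\,u)=0$, I get
\begin{equation*}
0=\|u\|_{\overline S}^2+qm\,\|u\|^2+(R_\ast u\,|\,u).
\end{equation*}

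The next step is to control the Ricci term. Because $X$ is compact and $R_\ast$ is the pointwise endomorphism of $\Lambda^q T^{\ast 0,1}X$ defined in \eqref{g-160407}, there exists a constant $C=C(q,X)>0$, depending only on the fixed CR structure and the Webster metric, such that
\begin{equation*}
\bigl|(R_\ast u\,|\,u)\bigr|\leq C\,\|u\|^2\quad\text{for every }u\in\Omega^{0,q}(X).
\end{equation*}
Combined with \eqref{p-161113}, which asserts $\|u\|_{\overline S}^2\geq 0$, this yields
\begin{equation*}
0\geq (qm-C)\,\|u\|^2.
\end{equation*}
Thus, choosing $m_0:=C$ (noting $q\geq 1$), any $m>m_0$ forces $u=0$, so $\mathcal H^q_{b,m}(X)=0$ and therefore $H^q_{b,m}(X)=0$ by the Hodge isomorphism.

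The only delicate point in this plan is really bookkeeping: one must check that the constant $C$ controlling $(R_\ast u\,|\,u)$ can be taken uniform in $q$ and independent of $m$, which is immediate here because $R_\ast$ is a fixed bundle endomorphism on the compact manifold $X$ and its operator norm on $\Lambda^q T^{\ast 0,1}X$ is bounded by a geometric quantity independent of $u$. Everything else is a direct application of the Tanaka-Webster Bochner formula and the identification $\nabla_T=L_T$ on $(0,q)$-forms, both already established in the preceding subsection.
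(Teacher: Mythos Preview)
Your proof is correct and follows essentially the same route as the paper: apply the Tanaka--Webster Bochner identity \eqref{e-160228}, use $\nabla_T=L_T$ on $(0,q)$-forms to turn the $-qi(\nabla_Tu\,|\,u)$ term into $qm\|u\|^2$, bound the Ricci term by compactness, and conclude via the Hodge isomorphism. The paper's proof is slightly terser (it derives the coercivity estimate $(\Box^{(q)}_{b,m}u\,|\,u)\geq C_m\|u\|^2$ rather than working directly with harmonic forms), but the content is the same.
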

\begin{proof}
By Lemma \ref{d-160228}, we have $\nabla_T u=L_T u$.
Then by (\ref{e-160228}) for any $u\in\Omega^{0, q}_m(X)$ we have
\begin{equation}\label{e-160308}
(\Box^{(q)}_{b, m}u| u)=\|u\|_{\overline S}^2+qm\|u\|^2+(R_\ast u| u).
\end{equation}
There exists $m_0>0$ such that for any $m>m_0, m\in\mathbb N$ we have
\begin{equation}
(\Box^{(q)}_{b, m}u|u)\geq C_m\|u\|^2 ~\text{for}~u\in\Omega^{0, q}_m(X), q\geq1.
\end{equation}
This implies  $\mathcal H^q_{b, m}(X)=0$ for $m>m_0$, $q\geq 1.$ By the Hodge isomorphism \eqref{i},
we get the conclusion of the theorem.
\end{proof}
\section{$S^1$-invariant deformation of the
  CR structure}\label{section-160322}
Let $\{J_t\}_{t\in(-\delta,\delta)}$ be  a deformation of $J$.
As before, let $T_t^{1, 0}X=\{U\in \mathbb CHX: J_t U=iU\}$.
We also say  $T_t^{1, 0}X$  is a (smooth) deformation of $T^{1, 0}X$.
In this work, we are especially interested in the
$S^1$-invariant deformations of CR structures. As in Definition~\ref{g-160301b}, we introduce:
\begin{definition}\label{g-160301}
We say the smooth deformation $T_t^{1, 0}X$ of  $T^{1, 0}X$ is
$S^1$-invariant if for any $t\in(-\delta, \delta)$ we have $L_TJ_t=0$ or, equivalently,
$[T, \Gamma(T_t^{1, 0}X)]\subset\Gamma(T_t^{1, 0}X)$.
\end{definition}
We give some examples of $S^1$-invariant deformations of CR structures.
\begin{example}\label{example-160328}
Let $X=\mathbb S^3=\{z=(z_1, z_2)\in\mathbb C^2: |z_1|^2+|z_2|^2=1\}$ be the boundary of the unit ball in $\mathbb C^2$, and let the induced CR structure $T^{1, 0}X$ be generated by $Z=\overline z_2\frac{\partial }{\partial z_1}-\overline z_1\frac{\partial}{\partial z_2}$. Thus, $(X, T^{1, 0}X)$ forms a compact strictly pseudoconvex CR manifold. The $S^1$-action on $X$ is given  by
\begin{equation}\label{action-160330}
e^{i\theta}(z_1, z_2)=(e^{i\theta}z_1, e^{in\theta}z_2), n\in\mathbb Z, n>0.
\end{equation}
By direct calculation, the $S^1$- action given above is a locally free transversal CR $S^1$- action. The global vector field induced by the $S^1$-action on $X$ is given by
$$
T=i\Big(z_1\frac{\partial}{\partial z_1}-
\overline z_1\frac{\partial}{\partial \overline z_1}+
nz_2\frac{\partial}{\partial z_2}
-n\overline z_2\frac{\partial }{\partial \overline z_2}\Big).
$$
By  simple calculation,
\begin{equation}
[T, Z]=-i(n+1)Z.
\end{equation}
Let $\Phi(z, t)=\phi(z)\chi(t)$ with $\phi(z)$ and $\chi(t)$ smooth functions on $X$ and $\mathbb R$ respectively. We assume that $T\phi(z)=-2i(n+1)\phi$ with $\phi$ a non-zero smooth function on $X$ and $\chi(0)=0.$  This is possible because we can find a smooth function $h$ on $X$ such that $\int_0^{2\pi } h(e^{i\theta}z)e^{i2(n+1)\theta}d\theta\neq 0$, then we define $\phi(z)=\int_0^{2\pi } h(e^{i\theta}z)e^{i2(n+1)\theta}d\theta$. Then for each $t\in\mathbb R$ the  deformation $T_t^{1, 0}X$ of  $T^{1, 0}X$ is given by
\begin{equation}
T_t^{1, 0}X={\rm span}_{\mathbb C}\{Z+\Phi(z, t)\overline Z\}.
\end{equation}
It is easy to check that
\begin{equation}
[T, Z+\Phi(z, t)\overline Z]=-i(n+1)(Z+\Phi(z, t)\overline Z)\in\Gamma(T_t^{1, 0}X).
\end{equation}
Thus, $T_t^{1, 0}X$ is an $S^1$-invariant deformation of ~$T^{1, 0}X.$
\end{example}
\begin{remark}
In Rossi's  global non-embeddability example
\cite{AS70,Bur:79,G94,Ro65}, a real analytic deformation of
$T^{1, 0}\mathbb S^3$ was considered.
For each $t\in\mathbb R$, the new CR structure $T_t^{1, 0}\mathbb S^3$ on
$\mathbb S^3$ is generated by $Z+t\overline Z$.
It is easy to check that this  is not a $S^1$-invariant deformation with
respect to the $S^1$-action given in (\ref{action-160330}).
\end{remark}
Now, we assume that $\{J_t\}_{t\in(-\delta,\delta)}$ is a $S^1$-invariant deformation of $J$.
Then, $\mathbb CHX=T^{1, 0}_tX\bigoplus T^{0, 1}_tX$, that is,
the deformations are always horizontal.
This implies that the $S^1$-action on $X$ is transversal.
From Definition \ref{g-160301}, we know that the $S^1$-action on $X$
is a transversal CR $S^1$-action with respect to the deformation
$T_t^{1, 0}X$ for $t\in(-\delta, \delta)$.

We now express  $T_t^{1, 0}X$ in an explicit way. Let $\{Z_j\}_{j=1}^{n-1}$ be a canonical frame of $T^{1, 0}X$ defined in Theorem \ref{e-can-160301}. Then locally we have
\begin{equation}\label{e-gue160411}
T_t^{1, 0}X={\rm Span}_{\mathbb C}\left\{Z_j+\sum_{k=1}^{n-1}\Phi_{j\overline k}(\cdot, t)\overline Z_k, j=1, \ldots, n-1\right\}
\end{equation}
for $|t|$ small. We may assume that \eqref{e-gue160411} holds for all $t\in(-\delta,\delta)$.
$ Z_j+\sum_{k=1}^{n-1}\Phi_{j\overline k}(\cdot, t)\overline Z_k$, $ 1\leq j\leq n-1$ is a basis of CR structure $T_t^{1, 0}X$. Here, $\{\Phi_{j, k}(\cdot, t)\}_{1\leq j, k\leq n-1}$ is called deformation matrix
and $\{\Phi_{j\overline k}\}_{j, k=1}^{n-1}$ are smooth functions on $X$ which smoothly depend on $t\in(-\delta, \delta).$
\begin{lemma}
With the notations used above,
for any $t\in (-\delta, \delta)$, we have
$T\Phi_{j\overline k}=0$ for $1\leq j, k\leq n-1.$
\end{lemma}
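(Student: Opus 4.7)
The plan is to exploit the canonical BRT coordinates from Theorem~\ref{e-can-160301}, compute the Lie bracket $[T,\,\cdot\,]$ against the spanning vector fields of $T_t^{1,0}X$, and then read off the constraint forced by the $S^1$-invariance hypothesis $[T,\Gamma(T_t^{1,0}X)]\subset\Gamma(T_t^{1,0}X)$.

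Working on a canonical patch $D$ with coordinates $(z,\theta)$, recall that the canonical frame is
\[
Z_j=\frac{\partial}{\partial z_j}+i\frac{\partial\varphi(z)}{\partial z_j}\frac{\partial}{\partial\theta},\qquad T=\frac{\partial}{\partial\theta},
\]
with $\varphi$ independent of $\theta$. Hence the canonical frame is rigid in the strong form $[T,Z_j]=0$, and therefore also $[T,\overline Z_k]=0$ for $1\le j,k\le n-1$. The first step is to use this to compute, for $W_j:=Z_j+\sum_{k=1}^{n-1}\Phi_{j\overline k}(\cdot,t)\overline Z_k$,
\[
[T,W_j]=\sum_{k=1}^{n-1}\bigl(T\Phi_{j\overline k}\bigr)\overline Z_k+\sum_{k=1}^{n-1}\Phi_{j\overline k}[T,\overline Z_k]=\sum_{k=1}^{n-1}\bigl(T\Phi_{j\overline k}\bigr)\overline Z_k,
\]
which is a pure section of $T^{0,1}X$.

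The second step is to invoke the $S^1$-invariance of the deformation, which by Definition~\ref{g-160301} gives $[T,W_j]\in\Gamma(T_t^{1,0}X)$. On the other hand, any element of $T_t^{1,0}X$ is of the form $\sum_j a_j W_j=\sum_j a_jZ_j+\sum_{j,k}a_j\Phi_{j\overline k}\overline Z_k$, whose projection onto $T^{1,0}X$ along the decomposition $\mathbb C HX=T^{1,0}X\oplus T^{0,1}X$ is $\sum_j a_jZ_j$. Since $\{Z_j\}$ is a basis of $T^{1,0}X|_D$, this projection vanishes only when all $a_j=0$, i.e.\ $T_t^{1,0}X\cap T^{0,1}X=\{0\}$ (alternatively, this follows directly from $\mathbb CHX=T_t^{1,0}X\oplus T_t^{0,1}X$ together with $T_t^{0,1}X\cap T^{0,1}X=\{0\}$ for the horizontal deformation, but the local argument suffices).

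Combining the two steps, $[T,W_j]$ lies simultaneously in $\Gamma(T_t^{1,0}X)$ and in $\Gamma(T^{0,1}X)$, hence it vanishes. Linear independence of the $\overline Z_k$'s then yields $T\Phi_{j\overline k}=0$ on $D$ for all $1\le j,k\le n-1$. Since the canonical patches cover $X$ and the conclusion is coordinate-independent (the functions $\Phi_{j\overline k}$ are defined intrinsically by the deformation matrix relative to any chosen canonical frame, which may be patched), the identity holds globally. The only point requiring a little care is the matching of different canonical frames on overlapping patches, but on overlaps the transition is by rigid matrices (the canonical frames are themselves rigid), so $T\Phi_{j\overline k}=0$ is preserved. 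I do not anticipate any real obstacle here; the core of the argument is the one-line bracket computation $[T,W_j]=\sum_k(T\Phi_{j\overline k})\overline Z_k$.
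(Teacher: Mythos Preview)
Your argument is correct and essentially identical to the paper's: both compute $[T,W_j]=\sum_k (T\Phi_{j\overline k})\overline Z_k$ using $[T,Z_j]=[T,\overline Z_k]=0$, invoke $S^1$-invariance to force this bracket into $\Gamma(T_t^{1,0}X)$, and then use $T_t^{1,0}X\cap T^{0,1}X=\{0\}$ (the paper phrases this as writing $\sum_k T\Phi_{j\overline k}\overline Z_k=\sum_l c_l W_l$ and reading off $c_l=0$) together with linear independence of the $\overline Z_k$. Your discussion of patching across canonical charts is unnecessary, since the $\Phi_{j\overline k}$ are by construction local functions attached to a fixed canonical frame on a single patch; the statement is local and no globalization is required.
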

\begin{proof}
Since the $S^1$-action on $X$ is transversal and CR with respect to the CR structure $J_t$, we have $[T, Z_j+\sum_{ k=1}^{n-1}\Phi_{j\overline k}\overline Z_k]\in\Gamma(T^{1, 0}_tX)$. From Theorem \ref{e-can-160301}, we know $[T, Z_j]=[T, \overline Z_j]=0$ for $1\leq j\leq n-1.$ Then $[T, Z_j+\sum_{k=1}^{n-1}\Phi_{j\overline k}\overline Z_k]=\sum_{k=1}^{n-1} T\Phi_{j\overline k}\overline Z_k\in\Gamma(T_t^{1, 0}X).$ At each point, we write $\sum_{k=1}^{n-1} T\Phi_{j\overline k}\overline Z_k=\sum_{j=1}^{n-1}c_j(Z_j+\sum_{l=1}^{n-1}\Phi_{j\overline l}\overline Z_l) $ for constants $c_l, 1\leq l\leq n-1.$  The equality implies that $c_l=0, 1\leq l\leq n-1$, i.e., $\sum_{k=1}^{n-1} T\Phi_{j\overline k}\overline Z_k=0$. Since $\{\overline Z_l\}_{l=1}^{n-1}$ are linear independent, we have that $T\Phi_{j\overline k}=0$ for $1\leq j, k\leq n-1.$
\end{proof}
Associated with the CR structure tensor $J_t, t\in(-\delta, \delta)$, the Levi form on $X$ is defined by
\begin{equation}
\mathcal L_{t, x}(U, V)=-d\omega_0(J_tU, V), \forall~ U, V\in H_x X, \forall~x\in X.
\end{equation}
When $\delta$ is sufficiently small, the quadratic form $\mathcal L_{t, x}$ is still positive and we may assume that the CR manifold $(X, T^{1, 0}_tX)$ is strictly pseudoconvex for $t\in(-\delta, \delta).$ Since the $S^1$-action on $(X, T_t^{1, 0}X)$ is  transversal and CR, using $\mathcal L_{t, x}$ we can define a Riemannian metric $g_t$ on $\mathbb CTX$ as  (\ref{a-160227}).
As \eqref{e-gue160410}, $g_t$ induces a rigid Hermitian metric $\langle\cdot\,|\,\cdot\rangle_t$ on $\mathbb CTX$ such that
\begin{equation}
T^{1, 0}_tX\perp T^{0, 1}_tX,\quad  T\perp T^{1, 0}_tX\oplus T^{0, 1}_tX.
\end{equation}

Denote by $T_t^{\ast 1,0}X$ and $T_t^{\ast0,1}X$ the dual bundles of
$T_t^{1,0}X$ and $T_t^{0,1}X$ respectively and define the vector bundle of $(0,q)$-forms by
$\Lambda^qT_t^{\ast0,1}X$.
Similarly as in Section \ref{prem160323}, let $\Omega^{0, q}_t(X)$ denote
the space of global smooth sections of $\Lambda^qT_t^{\ast0, 1}X$ and for every
$m\in\mathbb Z$, let $\Omega^{0, q}_{t, m}(X)=\{u\in\Omega^{0, q}_t(X): Tu=imu\}.$
Let $\overline\partial_{t, b}: \Omega^{0, q}_t(X)\rightarrow\Omega^{0, q+1}_t(X)$
be the tangential Cauchy-Riemann operator with respect to the new CR structure
$T_t^{1, 0}X$. Then we still have that $T\overline\partial_{t, b}=\overline\partial_{t, b}T$
and $\overline\partial_{t, b, m}:=
\overline\partial_{t, b}: \Omega^{0, q}_{t,m}(X)\rightarrow
\Omega^{0, q+1}_{t,m}(X)$, for every $m\in\mathbb Z$.
Using the $\overline\partial_{t, b}$-complex,
$\overline\partial_{t, b, m}$-complex on
$\Omega^{0, q}_{t}(X)$, $\Omega^{0, q}_{t, m}(X)$
respectively, we can define the Kohn-Rossi cohomology
$H^q_{t, b}(X)$ and the $m$-th Fourier component of
Kohn-Rossi cohomology $H^q_{t, b, m}(X)$ for each
$m\in\mathbb Z$ respectively, $q=0,1,\ldots,n-1$.
In the remainder of this section, our goal is to prove the following:
\begin{theorem}\label{main-160302}
Let $(X, HX, J)$ be a compact strictly pseudoconvex CR
manifold of real dimension $2n-1, n\geq 2$ with a locally free transversal
CR $S^1$-action. Let $\{J_t\}_{t\in(-\delta, \delta)}$ be a
$S^1$-invariant deformation of $J$.
Then there exists positive constants $m_0$ and
$\delta_0<\delta$ such that for $m\in\mathbb Z$, $m>m_0$ and $|t|<\delta_0$,
\begin{equation}\label{e-160314-1}
(\Box^{(q)}_{t, b, m}u|u)_t\geq
C_m\|u\|_t^2, \:\:u\in\Omega^{0, q}_{t, m}(X),\:\: q\geq1,
\end{equation}
where $C_m$ is a constant independent of $t$. In particular, we have the simultaneous vanishing
\begin{equation}
H^q_{t, b, m}(X)=0, \:\:m> m_0,\:\: |t|<\delta_0,\:\: q\geq1.
\end{equation}
\end{theorem}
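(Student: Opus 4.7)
The plan is to adapt the proof of the vanishing theorem at the end of Section~\ref{prem160323} (based on Proposition~\ref{c-160228}) to the deformed structures $J_t$, and then observe that all constants can be made uniform in $t$ by a compactness/smoothness argument.

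First, I would verify that Proposition~\ref{c-160228} applies verbatim to each $(X, J_t)$ for $|t|<\delta_0$ with $\delta_0$ small. Indeed, by hypothesis $\{J_t\}$ is an $S^1$-invariant deformation, so by Definition~\ref{g-160301} the original Reeb vector field $T$ remains transversal and CR with respect to the new CR structure $T_t^{1,0}X$, and the Levi form $\mathcal L_{t,x}$ is positive definite for $\delta_0$ small. Hence the data $(T,\omega_0,J_t)$ defines a strictly pseudoconvex pseudohermitian structure with transversal CR $S^1$-action, and the Tanaka-Webster connection $\nabla^t$, curvature, and Ricci operator $R_{\ast,t}$ are defined as in Section~\ref{prem160323}. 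Moreover, since $L_TJ_t=0$, the analog of Lemma~\ref{d-160228} holds for each $t$, giving $\nabla^t_T U = L_T U$ for $U\in\Gamma(T^{1,0}_tX)$, and consequently $\nabla^t_T u = L_T u = i m u$ for any $u\in\Omega^{0,q}_{t,m}(X)$. Plugging this into Proposition~\ref{c-160228} for the structure $J_t$ yields
\begin{equation}\label{e-key-plan}
(\Box^{(q)}_{t,b,m} u\,|\, u)_t = \|u\|^2_{\overline S,t} + qm\,\|u\|_t^2 + (R_{\ast,t} u\,|\, u)_t,\quad u\in\Omega^{0,q}_{t,m}(X),
\end{equation}
where $\|u\|_{\overline S,t}^2 \geq 0$ by the integration by parts identity \eqref{p-161113} applied to $(X,J_t)$.

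Next, I would exploit the smooth dependence of the data on $t$. Since $X$ is compact, $\{J_t\}$ is smooth in $t$, and the deformation matrix $\{\Phi_{j\ol k}(\cdot,t)\}$ is smooth in $t$ (with $\Phi_{j\ol k}(\cdot,0)=0$), the Webster metric $g_t$, the connection $\nabla^t$ and hence the Ricci operator $R_{\ast,t}$ all vary smoothly with $t$. In particular, the pointwise operator norm of $R_{\ast,t}$ acting on $(0,q)$-forms is uniformly bounded: there is a constant $C_0$ such that
\[
|(R_{\ast,t} u\,|\, u)_t|\leq C_0\,\|u\|_t^2\quad\text{for all }u\in\Omega^{0,q}_t(X),\ |t|<\delta_0.
\]
Combining this with \eqref{e-key-plan} and discarding the nonnegative term $\|u\|^2_{\overline S,t}$ gives, for $q\geq 1$,
\[
(\Box^{(q)}_{t,b,m}u\,|\, u)_t \;\geq\; (qm - C_0)\,\|u\|_t^2.
\]
Choosing $m_0:=C_0$ (which is independent of $t$), we obtain \eqref{e-160314-1} with $C_m := m - C_0 > 0$ for every $m>m_0$ and every $|t|<\delta_0$, as desired.

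Finally, the simultaneous vanishing $H^q_{t,b,m}(X)=0$ for $q\geq 1$, $m>m_0$ and $|t|<\delta_0$ follows immediately: applying Theorem~\ref{gI} to the structure $J_t$ identifies $H^q_{t,b,m}(X)$ with the harmonic space $\mathcal{H}^q_{t,b,m}(X)=\Ker\Box^{(q)}_{t,b,m}$ in $L^2_{(0,q),m,t}(X)$, and \eqref{e-160314-1} forces this kernel to be zero. The main obstacle I anticipate is purely bookkeeping, namely confirming that $R_{\ast,t}$ and the associated pointwise bound depend continuously on $t$ in a sufficiently strong sense to yield a constant $C_0$ uniform on $(-\delta_0,\delta_0)$; this is handled by the smoothness of $\{\Phi_{j\ol k}\}$ together with compactness of $X$, the latter ensuring that all derivative bounds up to the finite order needed for $R_{\ast,t}$ are uniform.
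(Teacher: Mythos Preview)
Your proposal is correct and follows essentially the same approach as the paper: apply the Bochner-type identity (Proposition~\ref{c-160228}) to each deformed structure $J_t$, use the $S^1$-invariance to replace $-i\nabla^t_T$ by $m$, bound the Ricci term $(R^t_\ast u\,|\,u)_t$ uniformly in $t$ via the smooth dependence of $\nabla^t$ and its curvature on the deformation parameter, and conclude the coercivity estimate and Hodge-theoretic vanishing. The paper's proof carries out exactly these steps, with the only cosmetic difference being that it writes out the curvature computation in a local orthonormal frame $\{e_{t,j}\}$ rather than appealing directly to an operator-norm bound on $R^t_\ast$.
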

Before the proof of  Theorem \ref{main-160302}, we first recall the harmonic theory with respect to  $\{J_t\}_{t\in(-\delta, \delta)}$ on $X$.
Let $(\,\cdot\,|\,\cdot\,)_t$ be the $L^2$ inner product on $\Omega^{0,q}_t(X)$ induced by the rigid Hermitian metric $\langle\cdot\,|\,\cdot\rangle_t$ and let $\|\cdot\|_t$ denote the corresponding norm. Then for all $u, v\in\Omega^{0,q}_t(X)$
\begin{equation}\label{h-160302}
(u|v)_t=\int_X\langle u| v\rangle_t dv_X,
\end{equation}
where $dv_X$ is the volume form on $X$ induced by the rigid Hermitian metric
$\langle\cdot\,|\,\cdot\rangle_t$. Recall that the volume
$dv_X=\omega_0\wedge\frac{(d\omega_0)^{n-1}}{(n-1)!}$
associated with $\langle\cdot\,|\,\cdot\rangle_t$ does not depend on $t$.
Let
$\overline\partial_{t, b}^\ast:\Omega^{0, q}_{t}(X)\rightarrow\Omega^{0, q-1}_{t}(X)$
be the formal adjoint of $\overline\partial_{t, b}$ with respect to
$(\cdot\,|\,\cdot)_t$ for $t\in(-\delta, \delta)$.
Since $\overline\partial_{t, b}T=T\overline\partial_{t, b}$ and the Hermitian metric
$\langle\cdot\,|\,\cdot\rangle_t$ is rigid, we have
$T\overline\partial_{t, b}^\ast=\overline\partial_{t, b}^\ast T$.
Define $\Box^{(q)}_{t, b}=\overline\partial_{t, b}\overline\partial_{t, b}^\ast+
\overline\partial_{t, b}^\ast\overline\partial_{t, b}$.
From the commutation of $T$ with $\overline\partial_{t, b}$, $\overline\partial_{t, b}^\ast$,
we have $\Box^{(q)}_{t, b}T=T\Box^{(q)}_{t, b}.$
Then $\Box^{(q)}_{t, b}$ maps $\Omega^{0, q}_{t, m}(X)$
into itself and we denote
\[
\Box^{(q)}_{t, b, m}:=\Box^{(q)}_{t, b}\Big|_{\Omega^{0, q}_{t, m}(X)}:
\Omega^{0, q}_{t, m}(X)\rightarrow\Omega^{0, q}_{t, m}(X),
\]
the restriction of $\Box^{(q)}_{t, b}$ to $\Omega^{0, q}_{t, m}(X)$.
As in Section \ref{prem160323}, let $L^2_{t, (0, q), m}(X)$ be the completion
of $\Omega^{0, q}_{t, m}(X)$ under the $L^2$ inner product defined in \eqref{h-160302}.
We extend $\Box^{(q)}_{t, b, m}$ to $L^2_{t, (0, q), m}(X)$ as in \eqref{j2}.
By Hodge theory for $\Box^{(q)}_{t, b, m}$ (Theorem \ref{gI})
there is an isomorphism $H^q_{t, b, m}(X)\cong \mathcal H^q_{t, b, m}(X)$,
where $\mathcal H^q_{t, b, m}(X)$ is the kernel of $\Box^{(q)}_{t, b, m}$.
Now we are going to show the simultaneous vanishing theorem for
the harmonic space $\mathcal H^q_{t, b, m}(X), q\geq 1$  and as a consequence we prove Theorem \ref{main-160302}.
\begin{proof}[Proof of Theorem \ref{main-160302}]
Since  $\{Z_{t, j}=Z_j+\Phi_{j\overline k}(\cdot, t)Z_{\overline k}\}_{j=1}^{n-1}$
is a frame of $T_t^{1, 0}X$  and  $\langle\cdot\,|\,\cdot\rangle_t$ depends smoothly on $t$ ,
then by linear algebra argument we can find an orthonormal frame of
$T_t^{1, 0}X$ which depends smoothly on $t$.
Locally, let $\{e_{t, j}\}_{j=1}^{n-1}$ be an orthonormal basis
of $T_t^{ 1, 0}X$ with respect to  $\langle\cdot\,|\,\cdot\rangle_t$
depending smoothly on $t$ and let $\{\omega_{t}^j\}_{j=1}^{n-1}$
be its dual basis.

Let $\nabla^{t}$ be the Tanaka-Webster connection with respect to $T_t^{1, 0}X$ and $\langle\cdot\,|\,\cdot\rangle_t$ for any $t\in(-\delta, \delta).$
Let $R^t$ and $R^t_\ast$ be its curvature and Ricci curvature operator respectively defined as in (\ref{f-160407}) and (\ref{g-160407}).  For any $u\in\Omega^{0, q}_t(X)$, then locally $u=\sum_{|J|=q}^\prime u_J\overline \omega^J_t$, where $\sum^\prime$ means that the summation is performed only over strictly increasing multi-indices. Here for a multi-index  $J=\{j_1, \ldots, j_q\}\in\{1,2,\ldots,n-1\}^q$, we set $\abs{J}=q$, $\overline \omega_t^J=\overline\omega_t^{j_1}\wedge\ldots\wedge\overline\omega_t^{j_q}$ and we say that $J$ is strictly increasing if $1\leq j_1<\ldots<j_q\leq n-1$. By definition of $R_\ast^t$, for any strictly increasing multi-index $1\leq k_1<\ldots<k_q\leq n-1$
\begin{equation}
R_\ast^tu(\overline e_{t, k_1}, \ldots, \overline e_{t, k_q})=\sum_{j=1}^q u(\overline e_{t, k_1}, \ldots, R^t_\ast \overline e_{t, k_j}, \ldots, \overline e_{t, k_q}),
\end{equation}
where \begin{equation}
R_{\ast}^t\overline e_{t, k_j}=-\sum_{i=1}^{n-1}R^t(e_{t, i}, \overline e_{t, i})\overline e_{t, k_j}.
\end{equation}
By (\ref{e-160308}), for any $u\in\Omega^{0, q}_{t, m}(X)$ we have
\begin{equation}\label{g-160308}
(\Box^{(q)}_{t, b, m}u|u)_t=\|u\|_{\overline S_t}^2+qm\|u\|_t^2+(R^t_\ast u|u)_t
\end{equation}
where $\|u\|_{\overline S_t}^2=-\sum_{i=1}^{n-1}\int\langle\nabla^t_{e_{t, i}}\nabla^t_{\overline e_{t, i}}u|u\rangle_tdv_X$. We claim that
$
(R^t_\ast u| u)_t\leq C\|u\|_t^2, \forall ~|t|\leq \delta
$
for a constant $C$ independent of $t$ when $\delta$ is small. For $u=\sum^\prime_{|J|=q}u_J\overline\omega_t^J\in\Omega^{0,q}_t(X)$, write $R^t_\ast u=\sum_{|J|=q}^\prime u^t_J\overline\omega^J_t.$ For any $J=\{j_1, \ldots, j_q\}$ with $1\leq j_1<\ldots<j_q\leq n-1$ we have $u^t_J=R^t_\ast u(\overline e_{t, j_1}, \ldots, \overline e_{t, j_q}).$ Then
\begin{equation}
\begin{split}
(R^t_\ast u| u)_t&=\sum_{ j_1<\ldots<j_q}\int_X R_\ast^tu(\overline e_{t, j_1}, \ldots, \overline e_{t, j_q})\overline {u_{j_1\ldots j_q}} dv_X\\
&=\sum_{ j_1<\ldots<j_q}\sum_{l=1}^q\int_X u(\overline e_{t, j_1}, \ldots, R_\ast^t \overline e_{t, j_l}, \ldots, \overline e_{t, j_q})\overline{u_{j_1\ldots j_q}}dv_X\\
&=-\sum_{i=1}^{n-1}\sum_{ j_1<\ldots<j_q}\sum_{l=1}^q\int_Xu(\overline e_{t, j_1}, \ldots, R^t(e_{t, i}, \overline e_{t, i})\overline e_{t, j_l}, \ldots, \overline e_{t, j_q})\overline{u_{j_1\ldots j_q}}dv_X.
\end{split}
\end{equation}
Since $J_t$,  $\langle\cdot\,|\,\cdot\rangle_t$ and  $\{Z_{t, j}\}$ depend smoothly on $t$, then the connection forms of $\nabla^t$ with respect to the frame $\{Z_{t, j}\}$ depend smoothly on $t$ and as a consequence, the curvature  of the  $\nabla^t$ also depend smoothly on $t$ . Thus, there exists a constant $\delta_0$ such that for any $|t|<\delta_0$ we have
\begin{equation}\label{h-160308}
(R^t_\ast u| u)_t\leq C\|u\|_t^2
\end{equation}
for some constant $C$ independent of $t$. From (\ref{g-160308}) and (\ref{h-160308}), there exist a constant $m_0>0$ independent of $t$ such that for any $m\in\mathbb Z, m>m_0$ we have
\begin{equation}\label{e-160314}
(\Box^{(q)}_{t, b, m}u|u)_t\geq C_m\|u\|_t^2,~\forall~ u\in\Omega^{0, q}_{t, m}(X), |t|<\delta_0, q\geq1,
\end{equation}
where $C_m$ is a constant independent of $t$ for $|t|<\delta_0$. From (\ref{e-160314}), we get $\mathcal H^q_{t, b, m}(X)=0$ for any $m\in\mathbb Z, m>m_0$ and $|t|<\delta_0.$ By Hodge theory we get the conclusion of Theorem \ref{main-160302}.
\end{proof}
From (\ref{e-160314}) we have the following.
\begin{corollary}\label{c-gue160325}
Let $\lambda(t,m)$ be an eigenvalue of $\Box^{(q)}_{t, b, m}$\,, $1\leq q\leq n-1$.
Assume that $m_0, \delta_0$ are the same as in Theorem \ref{main-160302}.
Then, for any $m\in\mathbb Z, m>m_0$ and $\abs{t}<\delta_0$,
we have $\lambda(t,m)\geq C_m$. Here, $C_m$ is a constant satisfying
$C_1m\leq C_m\leq C_2 m$ with $C_1$, $C_2$ independent of
$m$ and $t$, $|t|<\delta_0$.
\end{corollary}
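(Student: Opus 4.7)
The plan is to extract the bound on $C_m$ directly from the quadratic-form inequality produced inside the proof of Theorem~\ref{main-160302}, and then translate this into a lower bound on individual eigenvalues using self-adjointness. By Theorem~\ref{gI} (applied to the deformed structure), $\Box^{(q)}_{t,b,m}$ is a self-adjoint operator on $L^2_{t,(0,q),m}(X)$ with discrete spectrum; in particular, an eigenvalue $\lambda(t,m)$ admits a normalized eigenvector which, by the elliptic-type estimate of Theorem~\ref{g} (transplanted to the CR structure $J_t$), lies in $\Omega^{0,q}_{t,m}(X)$. Thus $\lambda(t,m)=(\Box^{(q)}_{t,b,m}u\,|\,u)_t$ for such a unit eigenvector $u$, and the whole problem reduces to producing the right quantitative lower bound on the Rayleigh quotient.

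For that, I would revisit formula~(\ref{g-160308}),
\[
(\Box^{(q)}_{t,b,m}u\,|\,u)_t=\|u\|_{\overline S_t}^{2}+qm\|u\|_t^{2}+(R^t_\ast u\,|\,u)_t,
\]
together with the uniform curvature bound~(\ref{h-160308}), $(R^t_\ast u\,|\,u)_t\leq C\|u\|_t^{2}$ with $C$ independent of $t$ for $|t|<\delta_0$. Since $\|u\|_{\overline S_t}^{2}\geq 0$ by~(\ref{p-161113}), this gives
\[
(\Box^{(q)}_{t,b,m}u\,|\,u)_t\geq (qm-C)\|u\|_t^{2}.
\]
The natural candidate is therefore $C_m:=qm-C$. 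Enlarging $m_0$ if necessary so that $qm_0>2C$, we obtain $C_m\geq \tfrac{1}{2}qm\geq \tfrac{1}{2}m$ for all $m>m_0$, which gives the lower bound $C_m\geq C_1m$ with $C_1=1/2$ independent of $t$ and $m$. The upper bound is automatic from the same formula: $C_m=qm-C\leq qm\leq(n-1)m$, so $C_2=n-1$ works.

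I do not anticipate any genuine obstacle; the only point that deserves a line of justification is the passage from the energy estimate to a pointwise bound on eigenvalues, which requires confirming that eigenvectors of $\Box^{(q)}_{t,b,m}$ are smooth $(0,q)$-forms in $\Omega^{0,q}_{t,m}(X)$ so that (\ref{g-160308}) can be applied to them. This follows by the subelliptic (Kohn) regularity used to derive Theorem~\ref{g} — combined with $Tu=imu$ and elliptic bootstrapping in the $T$-direction, any $L^2$ eigenvector of $\Box^{(q)}_{t,b,m}$ is smooth — so the argument closes. The constants $C_1=1/2$ and $C_2=n-1$ depend neither on $t\in(-\delta_0,\delta_0)$ nor on $m>m_0$, which is exactly the content of Corollary~\ref{c-gue160325}.
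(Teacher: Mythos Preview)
Your argument is correct and essentially identical to the paper's: the corollary is stated there as an immediate consequence of \eqref{e-160314}, and you have simply made explicit the smoothness of eigenvectors and the linear form $C_m=qm-C$ of the constant. One small slip worth correcting: the inequality you quote from \eqref{h-160308} is an \emph{upper} bound on $(R^t_\ast u\,|\,u)_t$, whereas the passage to $(\Box^{(q)}_{t,b,m}u\,|\,u)_t\geq(qm-C)\|u\|_t^{2}$ requires the \emph{lower} bound $(R^t_\ast u\,|\,u)_t\geq -C\|u\|_t^{2}$ --- this holds for the same reason (the Ricci operator $R^t_\ast$ is zeroth order with coefficients depending smoothly on $t$), and the paper's own formulation shares this imprecision.
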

\section{Stability of  Szeg\H{o} kernel of the Fourier components
of CR functions}\label{stability-160323}

In this section, we assume $m_0$, $\delta_0$ be the same constants
as in Theorem \ref{main-160302} unless otherwise stated.
Let $S_{t, m}:L^2(X)\To\mathcal H^0_{t, b, m}(X)$ be the
orthogonal projection with respect to $(\,\cdot\,|\,\cdot\,)_t$.
Since the volume form $dv_X$ with respect to $\langle\cdot\,|\,\cdot\rangle_t$
does not depend on $t$, the inner product $(\,\cdot\,|\, \cdot\,)_t$
is the same as $(\, \cdot\,|\,\cdot\,)$ on the space of smooth functions on $X$.
Let $S_{t,m}(x,y)\in C^\infty(X\times X)$ be the Schwartz kernel of $S_{t,m}$.
We denote $S_m:=S_{0,m}$, $S_m(x,y):=S_{0,m}(x,y)$. The goal of this section is to prove the following
\begin{theorem}\label{main2-160308}
With the notations above, assume that $m\geq m_0$.
For any $k\in\mathbb N$ and $\varepsilon>0$ there exists
$\delta_{k, \varepsilon}<\delta_0$ such that for all
$t\in\mathbb R$ with $|t|<\delta_{k, \varepsilon}$, we have
\begin{equation}\label{e-gue160325}
|S_{t, m}(x, y)-S_m(x, y)|_{C^k(X\times X)}<\varepsilon.
\end{equation}
\end{theorem}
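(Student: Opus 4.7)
The strategy is to represent $S_{t,m}$ by a Hodge-theoretic formula involving the Neumann operator of $\Box^{(1)}_{t,b,m}$, use the uniform spectral gap of Corollary~\ref{c-gue160325} to obtain $L^{2}$-operator-norm continuity of $S_{t,m}$ in $t$, and then upgrade to $C^{k}$-convergence of the Schwartz kernel via the subelliptic estimate of Theorem~\ref{f} applied uniformly in $t$.

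\noindent\textbf{Setup and explicit formula.} Since $m>m_{0}$, Corollary~\ref{c-gue160325} gives $\mathcal{H}^{1}_{t,b,m}(X)=\{0\}$ together with $\operatorname{Spec}(\Box^{(1)}_{t,b,m})\subset[C_{m},\infty)$ uniformly for $|t|<\delta_{0}$. Hence $\Box^{(1)}_{t,b,m}$ is invertible on $L^{2}_{t,(0,1),m}(X)$ with Neumann operator $N^{(1)}_{t,m}=(\Box^{(1)}_{t,b,m})^{-1}$ satisfying $\|N^{(1)}_{t,m}\|_{L^{2}\to L^{2}}\leq C_{m}^{-1}$. Because the Fourier-component decomposition of $L^{2}(X)$ depends only on $T$, a direct Hodge-theoretic argument (solving $\overline{\partial}_{t,b}g=\overline{\partial}_{t,b}u$ using $H^{1}_{t,b,m}(X)=0$) yields the key identity
\[
S_{t,m}=I-\overline{\partial}_{t,b}^{\ast}\,N^{(1)}_{t,m}\,\overline{\partial}_{t,b}
\qquad\text{on }L^{2}_{(0,0),m}(X).
\]

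\noindent\textbf{$L^{2}$-convergence.} The canonical frame $\{Z_{j}\}$, the deformation matrix $\{\Phi_{j\overline{k}}(\cdot,t)\}$ and the rigid Hermitian metric $\langle\cdot\,|\,\cdot\rangle_{t}$ all depend smoothly on $t$, so the local coefficients of $\overline{\partial}_{t,b}$, $\overline{\partial}_{t,b}^{\ast}$ and $\Box^{(1)}_{t,b,m}$ are smooth in $t$. After identifying $\Lambda^{q}T_{t}^{\ast 0,1}X$ with $\Lambda^{q}T^{\ast 0,1}X$ via orthogonal projection along $T_{t}^{\ast 1,0}X\oplus\mathbb{C}\omega_{0}$ (a bundle isomorphism for small $|t|$), one obtains for each $s\in\mathbb{N}_{0}$
\begin{align*}
\|\overline{\partial}_{t,b}-\overline{\partial}_{b}\|_{H^{s+1}\to H^{s}}&+\|\overline{\partial}_{t,b}^{\ast}-\overline{\partial}_{b}^{\ast}\|_{H^{s+1}\to H^{s}}\\
&+\|\Box^{(1)}_{t,b,m}-\Box^{(1)}_{b,m}\|_{H^{s+2}\to H^{s}}=O(|t|).
\end{align*}
Combined with the resolvent identity
\[
N^{(1)}_{t,m}-N^{(1)}_{m}=N^{(1)}_{t,m}\bigl(\Box^{(1)}_{b,m}-\Box^{(1)}_{t,b,m}\bigr)N^{(1)}_{m}
\]
and the uniform $L^{2}$-bound on $N^{(1)}_{t,m}$, this gives $\|S_{t,m}-S_{m}\|_{L^{2}\to L^{2}}\to 0$ as $t\to 0$.

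\noindent\textbf{Upgrade to $C^{k}$ kernel convergence.} Every $u\in L^{2}(X)$ satisfies $\Box^{(0)}_{t,b,m}(S_{t,m}u)=0$ and $T(S_{t,m}u)=im\,S_{t,m}u$, so Theorem~\ref{f} applied to the deformed operator (with constants uniform in $t$ since its principal symbol varies smoothly) yields $\|S_{t,m}u\|_{s+1}\leq C_{s,m}\|S_{t,m}u\|_{s}$. Iterating, $S_{t,m}:L^{2}(X)\to H^{s}(X)$ is uniformly bounded in $t$ for every $s$. The same bootstrap applied to $(S_{t,m}-S_{m})u$, whose image lies in a bounded-dimension sum of harmonic spaces, combined with the $L^{2}$-convergence above, yields $\|S_{t,m}-S_{m}\|_{L^{2}\to H^{s}}\to 0$. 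Finally, self-adjointness $S_{t,m}(x,y)=\overline{S_{t,m}(y,x)}$ and the reproducing identity $S_{t,m}(x,y)=\int_{X}S_{t,m}(x,z)S_{t,m}(z,y)\,dv_{X}(z)$ convert $L^{2}\to H^{s}$ convergence (composed in both slots) into $C^{k}(X\times X)$-convergence of the Schwartz kernels via Sobolev embedding for $s$ sufficiently large relative to $k$, proving \eqref{e-gue160325}.

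\noindent\textbf{Main obstacle.} The hard part is making the subelliptic bootstrap uniform in $t$: the constants $C_{s,m}$ in Theorem~\ref{f} depend on finitely many derivatives of the coefficients of $\Box^{(0)}_{t,b,m}$, and one must confirm that these derivatives are controlled uniformly for $|t|<\delta_{0}$; in parallel, the bundle identification $\Lambda^{q}T_{t}^{\ast 0,1}X\cong\Lambda^{q}T^{\ast 0,1}X$ must preserve Sobolev norms up to $t$-independent constants. Once this uniform functional-analytic setup is in place, the passage from the spectral-gap-driven $L^{2}$-continuity to $C^{k}$-convergence of the Szeg\H{o} kernel becomes a formal consequence of Hodge theory applied to the family $\{\Box^{(q)}_{t,b,m}\}_{t}$.
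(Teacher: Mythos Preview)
Your argument is essentially correct, but it follows a genuinely different route from the paper's. You work with the Neumann operator $N^{(1)}_{t,m}$ on $(0,1)$-forms, the Hodge formula $S_{t,m}=I-\overline{\partial}_{t,b}^{\ast}N^{(1)}_{t,m}\overline{\partial}_{t,b}$, and a resolvent identity to obtain $L^{2}$-operator convergence, which you then bootstrap. The paper instead stays entirely on functions: it uses the partial inverse $N_{t,m}$ of $\Box^{(0)}_{t,b,m}$ and the algebraic identity $S_{m}=(N_{t,m}\Box^{(0)}_{t,b,m}+S_{t,m})S_{m}=N_{t,m}(\Box^{(0)}_{t,b,m}-\Box^{(0)}_{b,m})S_{m}+S_{t,m}S_{m}$ (and its companion with the roles of $t$ and $0$ swapped) to obtain directly that $S_{m}-S_{t,m}=o(t):H^{-s}(X)\to H^{s}(X)$ for every $s$, from which the $C^{k}$-kernel statement follows immediately by Sobolev embedding. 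The paper's approach buys two things: it never needs the bundle identification $\Lambda^{1}T_{t}^{\ast 0,1}X\cong\Lambda^{1}T^{\ast 0,1}X$ (everything lives in scalar Sobolev spaces), and it reaches the $H^{-s}\to H^{s}$ estimate in one step rather than via your $L^{2}\to L^{2}$ then $L^{2}\to H^{s}$ then $H^{-s}\to H^{s}$ chain. Your approach has the virtue of making the role of the spectral gap on $(0,1)$-forms (Corollary~\ref{c-gue160325}) completely transparent through the explicit inverse $N^{(1)}_{t,m}$. One caution: your sentence ``The same bootstrap applied to $(S_{t,m}-S_{m})u$, whose image lies in a bounded-dimension sum of harmonic spaces'' is not by itself enough, since norm-equivalence constants on $\mathcal{H}^{0}_{t,b,m}+\mathcal{H}^{0}_{b,m}$ could in principle depend on $t$; the clean way to close this step is exactly what you indicate afterwards, namely to use $S_{t,m}^{2}=S_{t,m}$ together with the uniform bound $S_{t,m}:L^{2}\to H^{s}$ and self-adjointness to pass from $L^{2}\to H^{s}$ convergence to $H^{-s}\to H^{s}$ convergence.
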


For $s\in\mathbb Z$, let $H^{s}(X)$ denote the Sobolev space on $X$
of order $s$  of functions and let $\norm{\cdot}_{s}$
denote the standard Sobolev norm of order $s$ with respect to $(\,\cdot\,|\,\cdot\,)$. First, we need

\begin{lemma}\label{l-gue160325}
For every $m\geq m_0$ and every $s_0\in\mathbb N\cup\{0\}$,
there is a constant $C_{s_0,m}>0$ independent of $t\in(-\delta_0, \delta_0)$ such that
\begin{equation}\label{e-gue160325I}
\norm{S_{t, m}u}_{2s_0}\leq C_{s_0,m}\norm{u}_{-2s_0}\ \
~\text{for}~ u\in H^{-2s_0}(X) ~\text{and}~ |t|<\delta_0.
\end{equation}
\end{lemma}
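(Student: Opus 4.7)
The strategy is to combine (i) a $t$-uniform version of the subelliptic estimate from Theorem~\ref{f}, (ii) elliptic regularity on the harmonic space $\mathcal{H}^0_{t,b,m}(X)$, and (iii) a duality argument exploiting that, as noted just before the lemma, $(\cdot\,|\,\cdot)_t$ coincides with $(\cdot\,|\,\cdot)$ on functions, so that $S_{t,m}$ is self-adjoint with respect to a single, $t$-independent $L^2$ structure.

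First, I would verify that the a priori estimate of Theorem~\ref{f} can be upgraded to the $t$-uniform bound
\begin{equation*}
\|u\|_{s+1}\leq C_{s,m}\bigl(\|\Box^{(0)}_{t,b,m}u\|_{s}+\|u\|_{s}\bigr),\qquad u\in\Omega^{0,0}_{t,m}(X),\ |t|<\delta_0,
\end{equation*}
with constant $C_{s,m}$ independent of $t$. The proof of Theorem~\ref{f} goes through verbatim for $\Box^{(0)}_{t,b,m}$ because $T^{1,0}_tX$ is expressed locally via the deformation matrix $\{\Phi_{j\overline k}(\cdot,t)\}$, which varies smoothly in $t$, while $T$ itself is $t$-independent; the coefficients entering the Kohn-type commutator manipulations are therefore uniformly $C^\infty$-bounded on $|t|<\delta_0$. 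The hypothesis $Tu=imu$ on $\Omega^{0,0}_{t,m}(X)$ absorbs the $\|Tu\|_{s}$ term of Theorem~\ref{f} into a multiple of $\|u\|_{s}$.

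Next, for $u\in L^2(X)$ set $v_t:=S_{t,m}u\in\mathcal{H}^0_{t,b,m}(X)$, so that $\Box^{(0)}_{t,b,m}v_t=0$. Iterating the uniform estimate above, starting from $\|v_t\|_{0}\leq\|u\|_{0}$ (orthogonal projection is $L^2$-contractive, using the $t$-independence of the inner product on functions), yields
\begin{equation*}
\|S_{t,m}u\|_{2s_0}\leq \widetilde C_{s_0,m}\|u\|_{0},\qquad |t|<\delta_0.
\end{equation*}
To bring the right-hand norm down to $\|u\|_{-2s_0}$, I use self-adjointness of $S_{t,m}$: for $u\in C^\infty(X)$ and any $v\in C^\infty(X)$,
\begin{equation*}
|(S_{t,m}u\,|\,v)|=|(u\,|\,S_{t,m}v)|\leq\|u\|_{-2s_0}\|S_{t,m}v\|_{2s_0}\leq \widetilde C_{s_0,m}\|u\|_{-2s_0}\|v\|_{0},
\end{equation*}
so $\|S_{t,m}u\|_{0}\leq \widetilde C_{s_0,m}\|u\|_{-2s_0}$. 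Feeding this back into the regularity bootstrap gives
\begin{equation*}
\|S_{t,m}u\|_{2s_0}\leq \widetilde C_{s_0,m}\|S_{t,m}u\|_{0}\leq C_{s_0,m}\|u\|_{-2s_0},
\end{equation*}
and a standard density argument extends the inequality from $C^\infty(X)$ to $u\in H^{-2s_0}(X)$.

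The only non-routine step is the first one, namely verifying that the Kohn-type constant $C_{s,m}$ in the subelliptic estimate for $\Box^{(0)}_{t,b,m}$ is genuinely independent of $t$; this is where the smooth dependence of $\{J_t\}$, the $t$-independence of $T$ and $dv_X$, and compactness of $X$ are all used in concert. Once this uniformity is established, the remaining regularity-and-duality argument is standard for finite-rank orthogonal projectors and produces the desired estimate \eqref{e-gue160325I}.
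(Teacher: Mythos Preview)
Your proposal is correct and follows essentially the same route as the paper: uniform elliptic/subelliptic regularity on the harmonic range, then duality via self-adjointness of $S_{t,m}$, then idempotence to close the loop. The only cosmetic difference is that the paper phrases the uniform regularity step as G{\aa}rding's inequality for the genuinely elliptic operator $\Box^{(0)}_{t,b,m}-T^2$ (gaining two derivatives per iteration) rather than invoking a $t$-uniform version of Theorem~\ref{f}, and it writes the final ``feed back'' step explicitly as $\|S_{t,m}u\|_{2s_0}=\|S_{t,m}S_{t,m}u\|_{2s_0}\leq\widetilde C_{s_0,m}\|S_{t,m}u\|$.
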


\begin{proof}
Fix  $m\geq m_0$. By G{\aa}rding's inequality, for every $s\in\mathbb N_0$, it is easy to see that there is a constant $C_{s,m}>0$ independent of $t\in(-\delta_0, \delta_0)$ such that
\begin{equation}\label{e-gue151031r}
\|S_{t,m}u\|_{s+2}\leq C_{s,m}\Bigr(\|(\Box^{(0)}_{t,b,m}-T^2)S_{t,m}u\|_{s}+
\|S_{t,m}u\|_{s}\Bigr),\ \ \forall ~u\in L^2(X).
\end{equation}
From \eqref{e-gue151031r}, by using induction and noticing that
\[\begin{split}
&T^2S_{t,m}u=-m^2S_{t,m}u,\ \ \forall u\in L^2(X),\\
&\norm{S_{t,m}u}\leq\norm{u},\ \ \forall u\in L^2(X),
\end{split}\]
it is straightforward to see that for every $s\in\mathbb N$, there is a $\Td C_{s,m}>0$ independent of $t$ such that
\begin{equation}\label{e-gue151031rI}
\norm{S_{t,m}u}_{2s}\leq \Td C_{s,m}\norm{u},\ \ \forall u\in L^2(X).
\end{equation}
From (\ref{e-gue151031rI}), it is straightforward to see that $S_{t, m}$ can be extended from $L^2(X)$ to $H^{-2s}(X)$ for every $s\in\mathbb N.$
Fix $s_0\in\mathbb N$ and let $u\in H^{-2s_0}(X)$, we have $(S_{t, m}u|v)=(u, S_{t, m}v)$ for $v\in L^2(X),$ where  $(\cdot, \cdot)$ is the pair between $H^{-2s}(X)$ and $H^{2s}(X)$. Then
\begin{equation}\label{e-gue160325a}
\norm{S_{t,m}u}=\sup\set{\abs{(\,u\,, \,S_{t,m}v\,)}:\, v\in L^2(X),\ \ (\,v\,|\,v\,)=1}.
\end{equation}
Fix $v\in L^2(X)$, $(\,v\,|\,v\,)=1$. From \eqref{e-gue151031rI}, we have
\begin{equation}\label{e-gue160325aI}
\begin{split}
\abs{(\,u\,,\,S_{t,m}v\,)}\leq\norm{u}_{-2s_0}\cdot\norm{S_{t,m}v}_{2s_0}\leq\Td C_{s_0,m}\norm{u}_{-2s_0},
\end{split}
\end{equation}
where $\Td C_{s_0,m}>0$ is the constant as in \eqref{e-gue151031rI}. From \eqref{e-gue160325aI} and \eqref{e-gue160325a}, we conclude that
\begin{equation}\label{e-gue160325aII}
\norm{S_{t,m}u}\leq \Td C_{s_0,m}\norm{u}_{-2s_0},\ \ \forall u\in H^{-2s_0}(X).
\end{equation}
Now, from \eqref{e-gue151031rI} and \eqref{e-gue160325aII}, we have
\begin{equation}\label{e-gue160325b}
\begin{split}
\norm{S_{t,m}u}_{2s_0}&=\norm{S_{t,m}S_{t,m}u}_{2s_0}\leq\Td C_{s_0,m}\norm{S_{t,m}u}\\
&\leq(\Td C_{s_0,m})^2\norm{u}_{-2s_0},\ \ \forall u\in H^{-2s_0}(X).
\end{split}
\end{equation}
From \eqref{e-gue160325b}, the lemma follows.
\end{proof}

Let $N_{t,m}:L^2_m(X)\To{\rm Dom\,}(\Box^{(0)}_{t,b,m})$ be the partial inverse of $\Box^{(0)}_{t,b,m}$. We have
\begin{equation}\label{e-gue160325p}
\begin{split}
&\Box^{(0)}_{t,b,m}N_{t,m}+S_{t,m}=I\ \ \mbox{on $L^2_m(X)$},\\
&N_{t,m}\Box^{(0)}_{t,b,m}+S_{t,m}=I\ \ \mbox{on ${\rm Dom\,}(\Box^{(0)}_{t,b,m})$}.\\
\end{split}
\end{equation}
We denote $N_m:=N_{0,m}$. We need

\begin{lemma}\label{l-gue160325I}
For every $m\geq m_0$ and every $s\in\mathbb N_0$, there is a constant $C_{s,m}>0$ independent of $t\in(-\delta_0, \delta_0)$ such that
\begin{equation}\label{e-gue160325pII}
\norm{N_{t, m}u}_{s+2}\leq C_{s,m}\norm{u}_{s}~\text{for}~ u\in H^{s}(X)\bigcap L^2_m(X).
\end{equation}
\end{lemma}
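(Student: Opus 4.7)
The plan is to parallel the proof of Lemma~\ref{l-gue160325}, replacing $S_{t,m}$ by $N_{t,m}$ and exploiting the intertwining relations $\Box^{(0)}_{t,b,m}N_{t,m}=I-S_{t,m}$ from \eqref{e-gue160325p} together with $T^2N_{t,m}u=-m^2N_{t,m}u$ on $L^2_m(X)$. First I would apply the same $S^1$-invariant G\aa{}rding estimate underlying \eqref{e-gue151031r}, but to $N_{t,m}u$ in place of $S_{t,m}u$, obtaining
\begin{equation*}
\|N_{t,m}u\|_{s+2}\le C_{s,m}\bigl(\|(\Box^{(0)}_{t,b,m}-T^2)N_{t,m}u\|_{s}+\|N_{t,m}u\|_{s}\bigr)
\end{equation*}
with constants uniform for $t\in(-\delta_0,\delta_0)$. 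Substituting the intertwining relations, the right-hand side is dominated by $C_{s,m}\bigl(\|u\|_{s}+\|S_{t,m}u\|_{s}+(m^2+1)\|N_{t,m}u\|_{s}\bigr)$. Choosing $s_0\ge s/2$ in Lemma~\ref{l-gue160325} and using $\|u\|_{-2s_0}\le\|u\|_{s}$ yields $\|S_{t,m}u\|_{s}\le\widetilde C_{s,m}\|u\|_{s}$ uniformly in $t$, so the problem reduces to controlling $\|N_{t,m}u\|_{s}$ by $\|u\|_{s}$.

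The base of the induction is the uniform $L^2$ bound $\|N_{t,m}u\|\le C_m\|u\|$, and this is the main obstacle. I would derive it from the uniform spectral gap of $\Box^{(1)}_{t,b,m}$ provided by Corollary~\ref{c-gue160325} via a Hodge-decomposition argument at level $q=1$. Setting $v:=N_{t,m}u\in(\cH^0_{t,b,m}(X))^{\perp}$, the form $\xi:=\ddbar_{t,b,m}v$ lies in $\mathrm{Range}(\ddbar_{t,b,m})$, hence $\ddbar_{t,b,m}\xi=0$ by $\ddbar_{t,b,m}^2=0$, and therefore
\begin{equation*}
(\Box^{(1)}_{t,b,m}\xi\,|\,\xi)_t=\|\ddbar_{t,b,m}^{\ast}\xi\|_t^2=\|\Box^{(0)}_{t,b,m}v\|_t^2=\|u-S_{t,m}u\|_t^2\le\|u\|_t^2,
\end{equation*}
so Corollary~\ref{c-gue160325} gives $\|\xi\|_t\le C_m^{-1/2}\|u\|_t$. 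Since $\cH^1_{t,b,m}(X)=0$, we may write $v=\ddbar_{t,b,m}^{\ast}\alpha$ with $\alpha\in\mathrm{Range}(\ddbar_{t,b,m})$; applying the same spectral gap to such $\alpha$, for which again $\ddbar_{t,b,m}\alpha=0$, gives $\|\alpha\|_t\le C_m^{-1/2}\|\ddbar_{t,b,m}^{\ast}\alpha\|_t=C_m^{-1/2}\|v\|_t$. Combining,
\begin{equation*}
\|v\|_t^2=(\ddbar_{t,b,m}^{\ast}\alpha\,|\,v)_t=(\alpha\,|\,\xi)_t\le\|\alpha\|_t\|\xi\|_t\le C_m^{-1}\|v\|_t\|u\|_t,
\end{equation*}
which yields $\|v\|_t\le C_m^{-1}\|u\|_t$, as required.

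With these two ingredients in hand, \eqref{e-gue160325pII} follows by induction on $s$. Assuming the estimate at level $s-2$, the inductive hypothesis combined with $\|u\|_{s-2}\le\|u\|_{s}$ gives $\|N_{t,m}u\|_{s}\le C_{s-2,m}\|u\|_{s}$; feeding this bound together with $\|S_{t,m}u\|_{s}\le\widetilde C_{s,m}\|u\|_{s}$ into the G\aa{}rding inequality at order $s$ produces the estimate at order $s+2$. The one real subtlety in the whole argument is the $L^2$ step above, where the spectral gap must be transferred from $q=1$, where Corollary~\ref{c-gue160325} is available, back to $q=0$; everything else is routine elliptic bootstrapping.
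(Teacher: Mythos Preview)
Your argument follows the same route as the paper: G\aa{}rding's inequality for $\Box^{(0)}_{t,b,m}-T^2$ applied to $N_{t,m}u$, the identity $(\Box^{(0)}_{t,b,m}-T^2)N_{t,m}u=(I-S_{t,m})u+m^2N_{t,m}u$, the uniform $L^2$ bound $\|N_{t,m}u\|\le C_m^{-1}\|u\|$ as base case, and induction on $s$. Your explicit Hodge-theoretic derivation of the $L^2$ bound is in fact more careful than the paper's, which simply cites Corollary~\ref{c-gue160325}; since that corollary is stated only for $q\ge1$, your argument spells out the (standard) transfer of the spectral gap from $\Box^{(1)}_{t,b,m}$ to the nonzero spectrum of $\Box^{(0)}_{t,b,m}$.

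One small slip: your induction goes from level $s-2$ to level $s$, so starting from the base $s=0$ you only reach even $s$. The paper inducts from $s_0$ to $s_0+1$, bounding $\|N_{t,m}u\|_{s_0+1}\le\|N_{t,m}u\|_{s_0+2}\le C_{s_0,m}\|u\|_{s_0}\le C_{s_0,m}\|u\|_{s_0+1}$ via the inductive hypothesis, which covers all $s\in\mathbb N_0$. The same device (or adding $s=1$ as a second base case) fixes your version immediately.
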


\begin{proof}
We will prove \eqref{e-gue160325pII} by induction over $s\in\mathbb N_0$. By G{\aa}rding's inequality, it is easy to see that there is a constant $\Td C_{m}>0$ independent of $t$ such that
\begin{equation}\label{e-gue160325y}
\|N_{t,m}u\|_{2}\leq\Td C_{m}\Bigr(\|(\Box^{(0)}_{t,b,m}-T^2)N_{t,m}u\|+
\|N_{t,m}u\|\Bigr),\ \ \forall u\in L^2_m(X).
\end{equation}
From \eqref{e-gue160325p}, we have
\begin{equation}\label{e-gue160325yI}
(\Box^{(0)}_{t,b,m}-T^2) N_{t,m}u=(I-S_{t,m})u+m^2N_{t,m}u.
\end{equation}
From \eqref{e-gue160325yI} and Corollary~\ref{c-gue160325}, we see that there is a constant $\hat C_m>0$ independent of $t$ such that
\begin{equation}\label{e-gue160325yII}
\norm{N_{t,m}u}+\norm{(\Box^{(0)}_{t,b,m}-T^2)N_{t,m}u}\leq\hat C_m\norm{u},\  \ \forall u\in L^2_m(X).
\end{equation}
From \eqref{e-gue160325yII} and \eqref{e-gue160325y}, we see that \eqref{e-gue160325pII} holds for $s=0$.

We assume that \eqref{e-gue160325pII} holds for some $s_0\geq0$. We are going to prove that \eqref{e-gue160325pII} holds for $s_0+1$. By G{\aa}rding's inequality, it is easy to see that there is a constant $\Td C_{s_0,m}>0$ independent of $t$ such that
\begin{equation}\label{e-gue160325yIII}
\begin{split}
&\|N_{t,m}u\|_{s_0+3}\\
&\leq\Td C_{s_0,m}\Bigr(\|(\Box^{(0)}_{t,b,m}-T^2)N_{t,m}u\|_{s_0+1}+
\|N_{t,m}u\|_{s_0+1}\Bigr),\ \ \forall u\in H^{s_0+1}(X)\bigcap L^2_m(X).
\end{split}
\end{equation}
From \eqref{e-gue160325p}, we have
\begin{equation}\label{e-gue160325yIV}
(\Box^{(0)}_{t,b,m}-T^2)N_{t,m}u=(I-S_{t,m})u+m^2N_{t,m}u.
\end{equation}
From the proof of Lemma~\ref{l-gue160325}, we have
\begin{equation}\label{e-gue160325yV}
\norm{S_{t,m}u}_{s_0+1}\leq\norm{S_{t,m}u}_{2(s_0+1)}\leq c_{m,s_0}\norm{u}\leq c_{m,s_0}\norm{u}_{s_0+1},
\end{equation}
where  $c_{m,s_0}>0$ is a constant independent of $t$. By the induction, we have
\begin{equation}\label{e-gue160325yVI}
\norm{N_{t,m}u}_{s_0+1}\leq\norm{N_{t,m}u}_{s_0+2}\leq\hat c_{m,s_0}\norm{u}_{s_0}\leq\hat c_{m,s_0}\norm{u}_{s_0+1},
\end{equation}
where $\hat c_{m,s_0}>0$ is a constant independent of $t$. From \eqref{e-gue160325yVI}, \eqref{e-gue160325yV}, \eqref{e-gue160325yIV} and \eqref{e-gue160325yIII}, we see that \eqref{e-gue160325pII} holds for $s_0+1$. The lemma follows.
\end{proof}

Let $s_1, s_2\in\mathbb Z$. For a $t$-dependent operator $A_t:H^{s_1}(X)\To H^{s_2}(X)$, we write
\[A_t=o(t):H^{s_1}(X)\To H^{s_2}(X)\,,\quad t\to0,\]
if for every $\varepsilon>0$, there is a $\delta_1>0$ such that for all $|t|<\delta_1$, we have
\[\norm{A_tu}_{s_2}\leq\varepsilon\norm{u}_{s_1}~\text{for all}~ u\in H^{s_1}(X).\]

\begin{proof}[Proof of Theorem~\ref{main2-160308}]
Assume that $m\geq m_0$. From \eqref{e-gue160325p}, we have
\begin{equation}\label{e-gue160325s}
\begin{split}
S_m&=(N_{t,m}\Box^{(0)}_{t,b,m}+S_{t,m})S_m\\
&=N_{t,m}(\Box^{(0)}_{t,b,m}-\Box^{(0)}_{b,m})S_m+S_{t,m}S_m.
\end{split}
\end{equation}
Note that
\[(\Box^{(0)}_{t,b,m}-\Box^{(0)}_{b,m})S_m=o(t):H^{-s}(X)\To H^{s-2}(X),\ \ \forall s\in\mathbb N.\]
From this observation, \eqref{e-gue160325pII} and \eqref{e-gue160325s}, we deduce that
\begin{equation}\label{e-gue160325sI}
S_m-S_{t,m}S_m=o(t):H^{-s}(X)\To H^s(X),\ \ \forall s\in\mathbb N.
\end{equation}
Taking adjoints in \eqref{e-gue160325sI} with respect to $(\,\cdot\,|\,\cdot\,)$, we get
\begin{equation}\label{e-gue160325sV}
S_m-S_mS^*_{t,m}=o(t):H^{-s}(X)\To H^s(X),\ \ \forall s\in\mathbb N,
\end{equation}
where $S^*_{t,m}$ is the adjoint of $S_{t,m}$ with respect to  $(\,\cdot\,|\,\cdot\,)$. It is clear that
\begin{equation*}
S_{t,m}=S^*_{t,m}:H^{-s}(X)\To H^s(X),\ \ \forall s\in\mathbb N.
\end{equation*}
From this observation and \eqref{e-gue160325sV}, we conclude that
\begin{equation}\label{e-gue160325sVI}
S_m-S_mS_{t,m}=o(t):H^{-s}(X)\To H^s(X),\ \ \forall s\in\mathbb N.
\end{equation}
Similarly, from \eqref{e-gue160325p}, we have
\begin{equation}\label{e-gue160325sII}
\begin{split}
S_{t,m}&=(N_{m}\Box^{(0)}_{b,m}+S_{m})S_{t,m}\\
&=N_{m}(\Box^{(0)}_{b,m}-\Box^{(0)}_{t,b,m})S_{t,m}+S_{m}S_{t,m}.
\end{split}
\end{equation}
From Lemma~\ref{l-gue160325}, it is easy to check that
\begin{equation}\label{e-gue160325sIII}
N_{m}(\Box^{(0)}_{b,m}-\Box^{(0)}_{t,b,m})S_{t,m}=o(t):H^{-s}(X)\To H^s(X),\ \ \forall s\in\mathbb N.
\end{equation}
From \eqref{e-gue160325sIII} and \eqref{e-gue160325sII}, we deduce that
\begin{equation}\label{e-gue160325sIV}
S_{t,m}-S_{m}S_{t,m}=o(t):H^{-s}(X)\To H^s(X),\ \ \forall s\in\mathbb N.
\end{equation}
From \eqref{e-gue160325sVI} and \eqref{e-gue160325sIV}, we deduce that
\begin{equation}\label{e-gue160325sVII}
S_{m}-S_{t,m}=o(t):H^{-s}(X)\To H^s(X),\ \ \forall s\in\mathbb N.
\end{equation}

From \eqref{e-gue160325sVII} and the Sobolev embedding theorem, Theorem~\ref{main2-160308} follows.
\end{proof}
\begin{corollary}
There exists $\delta_1<\delta_0$ such that for  $m>m_0$, ${\rm dim}H^0_{t, b, m}(X)$ does not depend on $t\in(-\delta_1, \delta_1)$.
\end{corollary}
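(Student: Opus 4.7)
The plan is to express the dimension of $H^0_{t,b,m}(X)$ as an integral of the Szeg\H{o} kernel on the diagonal, exploit the continuity of this kernel in $t$ furnished by Theorem~\ref{main2-160308}, and then use the integrality of the dimension to conclude that it is locally constant.

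First I would identify $\dim H^0_{t,b,m}(X)$ with the trace of the Szeg\H{o} projector. By Theorem~\ref{gI}, $H^0_{t,b,m}(X)\cong\cH^0_{t,b,m}(X)$, and this harmonic space is finite dimensional and contained in $C^\infty(X)$; hence $S_{t,m}:L^2(X)\to\cH^0_{t,b,m}(X)$ is a finite rank orthogonal projection with smooth Schwartz kernel. Choosing an orthonormal basis $\{e_j^{(t)}\}_{j=1}^{N_t}$ of $\cH^0_{t,b,m}(X)$, one has $S_{t,m}(x,y)=\sum_{j=1}^{N_t}e_j^{(t)}(x)\overline{e_j^{(t)}(y)}$, so
\begin{equation*}
\dim H^0_{t,b,m}(X)=N_t=\operatorname{tr}(S_{t,m})=\int_X S_{t,m}(x,x)\,dv_X\,.
\end{equation*}

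Next I would apply Theorem~\ref{main2-160308} with $k=0$: for every $\varepsilon>0$ there exists $\delta_{0,\varepsilon}<\delta_0$ such that for all $|t|<\delta_{0,\varepsilon}$,
\begin{equation*}
\sup_{(x,y)\in X\times X}\bigl|S_{t,m}(x,y)-S_m(x,y)\bigr|<\varepsilon.
\end{equation*}
Restricting to the diagonal and integrating against $dv_X$ (which is independent of $t$) gives
\begin{equation*}
\bigl|\dim H^0_{t,b,m}(X)-\dim H^0_{b,m}(X)\bigr|\leq\varepsilon\cdot\mathrm{Vol}(X).
\end{equation*}
Choosing $\varepsilon<\tfrac{1}{\mathrm{Vol}(X)}$ and setting $\delta_1=\delta_{0,\varepsilon}$, the left-hand side is a non-negative integer strictly less than $1$, hence zero. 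Thus $\dim H^0_{t,b,m}(X)=\dim H^0_{b,m}(X)$ for all $|t|<\delta_1$.

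There is no serious obstacle here; the corollary follows almost immediately once one has the $C^0$-stability of the Szeg\H{o} kernel, combined with the integrality of the rank of a projection. The only points requiring care are (i) verifying that $S_{t,m}$ has a smooth kernel (which follows from the elliptic regularity embodied in Lemma~\ref{l-gue160325} applied to the range $\cH^0_{t,b,m}(X)\subset C^\infty(X)$), and (ii) noting that the volume form $dv_X$ does not depend on $t$, a fact emphasized earlier in the paper, so that the trace formula can be applied with the same measure on both sides.
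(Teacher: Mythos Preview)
Your proof is correct and follows essentially the same approach as the paper: both express the dimension as the integral of the Szeg\H{o} kernel over the diagonal, invoke Theorem~\ref{main2-160308} to get $C^0$-convergence of the kernels, and then use integrality of the dimension to conclude. Your version is simply more explicit about the trace formula and the choice of $\varepsilon$ relative to $\mathrm{Vol}(X)$, while the paper compresses this into a single displayed line; note that in both arguments the resulting $\delta_1$ a priori depends on $m$, so the uniform-in-$m$ reading of the statement is not literally established by either proof.
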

\begin{proof}
It is clear that
\begin{equation}
|{\rm dim}H^0_{t, b, m}(X)-{\rm dim}H^0_{b, m}(X)|=\left|\int_XS_{t, m}(x, x)-S_m(x, x)dv_X\right|\rightarrow0
\end{equation}
as $t\rightarrow0$ by Theorem \ref{main2-160308}.
Since $\dim H^0_{t, b, m}(X)$ is an integer, for each $m>m_0$ the function
$t\mapsto\dim H^0_{t, b, m}(X)$ is constant for $|t|$ is sufficiently small.
\end{proof}
\section{Stability of equivariant embedding of CR manifolds with $S^1$-action}\label{s_stab}

In a recent work \cite[Theorem 1.2]{HHL15} we showed:
\begin{theorem}\label{t-gue151127}
Let $(X,T^{1,0}X)$ be a compact connected strictly pseudoconvex
CR manifold with a transversal CR locally free $S^1$-action.
Then for every $m\in\mathbb N$,
there exist integers $\{m_j\}_{j=1}^N$ with $m_j\geq m$, $1\leq j\leq N$, and
CR functions $\{f_j\}_{j=1}^N$ with $f_j\in H^0_{b, m_j}(X)$
such the $S^1$-equivariant CR map
$\Phi: X\rightarrow \mathbb C^N$,  $x\mapsto (f_1(x), \ldots, f_N(x))$
is an embedding.
\end{theorem}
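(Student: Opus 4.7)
My plan is to prove the equivariant CR embedding theorem by analyzing the $m$-th Fourier Szeg\H{o} kernel $S_m(x,y)$ as $m\to\infty$ and constructing peak sections in $H^0_{b,m}(X)$ from it. The argument is an equivariant analogue of the Kodaira embedding, with $H^0_{b,m}(X)$ playing the role of global holomorphic sections of $L^m$ for an ample line bundle over the orbifold base $X/S^1$.

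First, I would establish growth and asymptotics of $S_m$. The simultaneous vanishing of the Fourier Kohn--Rossi cohomology (Theorem~\ref{main-160302} at $t=0$), together with the Hodge isomorphism from Theorem~\ref{gI}, identifies $H^0_{b,m}(X)$ with $\mathcal H^0_{b,m}(X)=\ker\Box^{(0)}_{b,m}$ on $L^2_{(0,0),m}(X)$ for $m>m_0$. To get quantitative information I would average the Boutet de Monvel--Sjöstrand parametrix of the full Szeg\H{o} projector $S$ against the character $e^{-im\theta}$ of the $S^1$-action, obtaining an oscillatory integral representation for $S_m$ and, by a stationary-phase analysis in the angular variable along each $S^1$-orbit, the on-diagonal expansion
\begin{equation*}
S_m(x,x)=m^{n-1}b_0(x)+O(m^{n-2}),\qquad b_0\in C^\infty(X),\ \ b_0>0,
\end{equation*}
together with off-diagonal decay showing that $y\mapsto S_m(y,x_0)$ concentrates on an $O(1/\sqrt m)$ tubular neighbourhood of the $S^1$-orbit through $x_0$.

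Next, I would use these asymptotics to construct peak sections and separate orbits. For each $x_0\in X$ and $m$ large, the normalized kernel
\begin{equation*}
u_{x_0}(y):=S_m(y,x_0)/\sqrt{S_m(x_0,x_0)}
\end{equation*}
lies in $H^0_{b,m}(X)$, has unit $L^2$ norm and is peaked on the orbit of $x_0$. Differentiating $S_m(\cdot,x_0)$ along a canonical frame $\{\overline Z_j\}$ produces additional elements of $H^0_{b,m}(X)$ whose $1$-jets at $x_0$ span the holomorphic tangent space of the slice transverse to the orbit. Transversality along the Reeb direction $T$ is automatic, because $Tf=imf$ with $m\neq 0$. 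Combining these local spanning jets with the off-diagonal decay and a compactness argument on the orbit space, finitely many $f_j\in H^0_{b,m_j}(X)$ with $m_j\geq m$ give a CR map $\Phi=(f_1,\ldots,f_N):X\to\mathbb C^N$ that is injective and immersive. The map is $S^1$-equivariant by construction: the induced target action is $e^{i\theta}(w_1,\ldots,w_N)=(e^{im_1\theta}w_1,\ldots,e^{im_N\theta}w_N)$.

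The main obstacle, as I see it, is the first step: the vanishing theorem alone guarantees an abundance of CR functions in high Fourier components but says nothing about their localization, so one genuinely needs the pointwise asymptotics of $S_m$. The delicate part of this microlocal analysis is the behaviour at points with non-trivial $S^1$-isotropy (permitted since the action is only locally free), where the orbit map $X\to X/S^1$ is only an orbifold submersion and the stationary-phase calculation in $\theta$ produces leading coefficients depending on the order of the isotropy group. Uniform positivity of $b_0$ across the different isotropy strata is what ultimately guarantees that one can realize the embedding using Fourier components of arbitrarily large prescribed minimum frequency $m$. Once this analytic input is in place, the remaining Kodaira-type argument is routine.
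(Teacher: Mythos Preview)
The paper does not prove this statement at all: Theorem~\ref{t-gue151127} is quoted verbatim from \cite[Theorem 1.2]{HHL15} with the preamble ``In a recent work \cite[Theorem 1.2]{HHL15} we showed:'' and no further argument. It is used as a black box to supply the equivariant embedding $\Phi$ whose stability under $S^1$-invariant deformations is then established in the remainder of Section~\ref{s_stab}. So there is no ``paper's own proof'' to compare your proposal against.

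That said, your outline is a faithful sketch of the strategy actually carried out in \cite{HHL15} (as its title ``Szeg\H{o} kernel expansion and equivariant embedding of CR manifolds with circle action'' indicates): one establishes an asymptotic expansion of the Fourier Szeg\H{o} kernel $S_m$ on the diagonal via stationary phase in the circle variable, extracts peak sections and their $1$-jets, and runs a Kodaira-type compactness argument on the orbit space. Your identification of the genuinely delicate point --- controlling the leading coefficient uniformly across isotropy strata when the action is only locally free --- is also accurate. For the purposes of the present paper, however, all of this is imported wholesale from \cite{HHL15}; the only analytic novelty here is the stability of $S_{t,m}$ in $t$ (Theorem~\ref{main2-160308}), which is what drives the perturbation argument for $\Phi_t$.
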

In this section, we choose $m_0$ as in Theorem~\ref{main-160302}.
We will show that the equivariant embedding in Theorem \ref{t-gue151127}
is stable under $S^1$-invariant deformations of CR structure.
For $|t|<\delta_0$, set $\Phi_{t, j}=S_{t, m_j}\Phi_j$.
Then $\{\Phi_{t, j}\}_{j=1}^N$ are CR functions with respect to
$T^{1, 0}_tX$ (or $J_t$). With these CR functions we define a CR map
with respect to  $T_t^{1, 0}X$ as follows
\begin{equation}\label{mainmap-160308}
\Phi_t: X\rightarrow\mathbb C^N, ~~~x\mapsto (\Phi_{t, 1}(x), \ldots, \Phi_{t, N}(x)).
\end{equation}

Now, we come to the following result, which
implies the main result of the paper, Theorem \ref{mtheorem-160323}.
\begin{theorem}
Let $(X, HX, J)$ be a compact connected strictly pseudoconvex CR manifold
with a transversal CR $S^1$-action. Let $\{J_t\}_{t\in(-\delta_0, \delta_0)}$
be a $S^1$-invariant deformation of $J$.
Let $m_0$ be as in Theorem~\ref{main-160302}.
Let $\Phi=(\Phi_1, \ldots, \Phi_N): X\rightarrow\mathbb C^N$
be an equivariant CR embedding with $\Phi_j\in H^0_{b, m_j}(X)$,
$m_j>m_0$, $1\leq j\leq N$. Then $\Phi_t$ defined in (\ref{mainmap-160308})
is a CR embedding when $\abs{t}$ is sufficiently small.
Moreover, for every $k\in\mathbb N$,
$\lim_{t\To0}\norm{\Phi_t-\Phi}_{C^k(X,\mathbb C^N)}=0$.
\end{theorem}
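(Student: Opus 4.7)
The proof combines three observations. First, the definition forces each $\Phi_{t,j}=S_{t,m_j}\Phi_j$ to lie in $\mathcal H^0_{t,b,m_j}(X)\subset\Ker\ddbar_{t,b}\cap\set{u\in C^\infty(X):Tu=im_ju}$; this simultaneously yields that $\Phi_t$ is CR with respect to $J_t$ (indeed $\ol U\Phi_{t,j}=0$ for every $\ol U\in T_t^{0,1}X$, hence $d\Phi_t$ maps $T_t^{0,1}X$ into $T^{0,1}\C^N$) and $S^1$-equivariant for the weighted action $(e^{i\theta},z)\mapsto(e^{im_1\theta}z_1,\ldots,e^{im_N\theta}z_N)$ on $\C^N$.

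Second, since $\Phi_j\in H^0_{b,m_j}(X)$, the Hodge isomorphism~\eqref{i} (applied at $t=0$, $q=0$) realises $\Phi_j$ as an element of $\mathcal H^0_{b,m_j}(X)=\Ker\Box^{(0)}_{b,m_j}$, so $S_{m_j}\Phi_j=\Phi_j$. Writing the difference through the Schwartz kernels,
\begin{equation*}
\Phi_{t,j}(x)-\Phi_j(x)=\int_X\bigl(S_{t,m_j}(x,y)-S_{m_j}(x,y)\bigr)\Phi_j(y)\,dv_X(y),
\end{equation*}
Theorem~\ref{main2-160308} combined with differentiation under the integral sign gives $\norm{\Phi_{t,j}-\Phi_j}_{C^k(X)}\to 0$ as $t\to 0$ for every $k\in\mathbb N_0$.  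This is the convergence assertion of the theorem.

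Third, to upgrade $\Phi_t$ to a (smooth) embedding for small $\abs{t}$, I invoke the openness of embeddings in the $C^1$-topology on compact manifolds. Since $\Phi$ is a smooth embedding of the compact $X$, there exist a Riemannian distance $d$ on $X$ and a constant $c>0$ with $\abs{\Phi(x)-\Phi(y)}\geq c\,d(x,y)$ for all $x,y\in X$. If this failed, extracting convergent subsequences from a counterexample $(x_\nu,y_\nu)$ leads either to two distinct limits (contradicting injectivity of $\Phi$) or to a common limit (contradicting the immersion property of $\Phi$ there). The $C^1$-convergence $\Phi_t\to\Phi$ then forces the analogous inequality for $\Phi_t$ with constant $c/2$ once $\abs{t}$ is small enough, so $\Phi_t$ is simultaneously injective and an immersion, hence a smooth embedding; together with Step~1, it is a CR embedding.

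The substantive obstacle is already absorbed into Theorem~\ref{main2-160308}, the $C^k$-convergence of the Fourier-component Szeg\H o kernels under $S^1$-invariant deformations; granted that, the argument reduces to routine kernel estimates and the elementary perturbative openness of smooth embeddings on compact manifolds.
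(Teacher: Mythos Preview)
Your proof is correct and follows essentially the same approach as the paper: both deduce $C^k$-convergence $\Phi_t\to\Phi$ from the kernel convergence of Theorem~\ref{main2-160308} (using $S_{m_j}\Phi_j=\Phi_j$), and both upgrade this to an embedding for small $\abs{t}$ via a compactness/contradiction argument exploiting the bi-Lipschitz bound $\abs{\Phi(x)-\Phi(y)}\geq c\,d(x,y)$ for the original embedding. Your packaging of the embedding step as a single global bi-Lipschitz perturbation is slightly cleaner than the paper's explicit sequence argument with the case split $p\neq q$ versus $p=q$, but the content is the same.
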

\begin{proof}
First, we prove $\Phi_t$ is an immersion for each $|t|$ is sufficiently small. 
Since \begin{equation}\label{ee-160407}
\Phi_{t, j}-\Phi_j=S_{t, m_j}\Phi_j-S_{m_j}\Phi_j=(S_{t, m_j}-S_{m_j})\Phi_j, 1\leq j\leq N,
\end{equation}
then by 
Theorem \ref{main2-160308}, we have  $|\Phi_{t, j}-\Phi_j|_{C^1(X)}$ is sufficiently small as $|t|\rightarrow 0$ for $1\leq j\leq N.$ Since $\Phi$ is an immersion, i.e., the rank of the Jacobian of $\Phi$ is $2n-1$, then there exists a constant $\sigma<\delta_0$ such that for $|t|<\sigma$ the rank of the Jacobian of $\Phi_t$ is always $2n-1$, that is, $\Phi_t$ is an immersion when $|t|<\sigma.$ Next, we claim that $\Phi_t$ is an injective map when $t$ is sufficiently small. We prove this claim by seeking a contradiction. If it is not true, there exists $\varepsilon_n\rightarrow0$ as $n\rightarrow\infty$ and two sequences of points $\{x_n\}, \{y_n\}\subset X$, $x_n\neq y_n$, for each $n$, such that $\Phi_{\varepsilon_n}(x_n)=\Phi_{\varepsilon_n}(y_n)$, $\forall n$. Since $X$ is compact, we assume that $x_n\rightarrow p$ and $y_n\rightarrow q$. If $p\neq q$, letting $\varepsilon_n\rightarrow 0$ we will have $\Phi(p)=\Phi(q).$ This is a contradiction with $\Phi$ an injective map. Now, we assume that $p=q.$ Then
\begin{equation}\label{e-160310}
|\Phi(x_n)-\Phi(y_n)|=|\Phi(x_n)-\Phi_{\varepsilon_n}(x_n)+\Phi_{\varepsilon_n}(y_n)-\Phi(y_n)|
=|(\Phi-\Phi_{\varepsilon_n})(x_n)-(\Phi-\Phi_{\varepsilon_n})(y_n)|.
\end{equation}
By Theorem \ref{main2-160308}, we have
\begin{equation}
\norm{\Phi_{\varepsilon_n}-\Phi}_{C^1(X,\mathbb C^N)}\rightarrow0~\text{as}~\varepsilon_n\rightarrow0.
\end{equation}
Then
\begin{equation}\label{n1-160308}
|(\Phi-\Phi_{\varepsilon_n})(x_n)-(\Phi-\Phi_{\varepsilon_n})(y_n)|\leq c_{\varepsilon_n}|x_n-y_n|,\ \ \forall n,
\end{equation}
where $c_{\varepsilon_n}$ is a sequence of constants with $c_{\varepsilon_n}\rightarrow0$ as $\varepsilon_n\rightarrow0.$ On the other hand, since $\Phi$ is an embedding, by implicit function theorem, there exists a constant $c$ independent of $\{x_n\}, \{y_n\}$ such that
\begin{equation}\label{n2-160308}
|\Phi(x_n)-\Phi(y_n)|\geq c|x_n-y_n|,\ \ \mbox{for $n$ large}.
\end{equation}
From (\ref{n1-160308}) and (\ref{n2-160308}), we get  a contradiction.
Thus, we get the injectivity of $\Phi_t$ for $|t|$ sufficiently small.
The fact that $\norm{\Phi_t-\Phi}_{C^k(X,\mathbb C^N)}\rightarrow 0$
is a direct consequence of  (\ref{ee-160407}) and Theorem \ref{main2-160308}.
\end{proof}

\begin{center}
{\bf Acknowledgement}
\end{center}

The authors thank the referee for many detailed remarks that have helped improve the presentation.

\bibliographystyle{amsalpha}

\end{document}